\documentclass[12pt]{amsart}
\usepackage{amssymb,mathrsfs,longtable,textcomp}
\usepackage[matrix,arrow,curve]{xy}
\usepackage{setspace}
\usepackage{multirow}
\usepackage{epsfig,color}
\usepackage{url}

\sloppy \pagestyle{plain}
\thispagestyle{empty}

\textwidth=16.4cm \textheight=23cm \oddsidemargin=0cm
\evensidemargin=0cm \topmargin=-20pt

\newcounter{cequation}[section]

\pagenumbering{arabic}

\newtheorem{theorem}[cequation]{Theorem}
\newtheorem*{theorem*}{Theorem}
\newtheorem{lemma}[cequation]{Lemma}
\newtheorem{corollary}[cequation]{Corollary}

\newtheorem{proposition}[cequation]{Proposition}

\theoremstyle{definition}

\newtheorem{definition}[cequation]{Definition}
\newtheorem*{definition*}{Definition}

\theoremstyle{remark}
\newtheorem{remark}[cequation]{Remark}

\makeatletter\@addtoreset{equation}{section}
\makeatletter\@addtoreset{section}{part}

\makeatother

\def \O {\mathcal{O}}

\def \CC {\Bbbk}

\def \PP {\mathcal{P}}

\def \P {\mathbb{P}}
\def \PP {\mathbb{P}}

\def \ZZ {\mathbb{Z}}

\def \A {\mathbb{A}}

\def \Aff {\mathbb{A}}

\newcommand{\Proj}{\mathrm{Proj}\,}


\def \Aut {\mathrm{Aut}}
\def \Cl {\mathrm{Cl}\,}
\def \Pic {\mathrm{Pic}\,}

\newcommand{\Spec}{\mathrm{Spec}}


\def \ge {\geqslant}
\def \le {\leqslant}


\title{Automorphisms of weighted complete intersections}

\author{Victor Przyjalkowski and Constantin Shramov}

\address{\emph{Victor Przyjalkowski}
\newline
\textnormal{Steklov Mathematical Institute of RAS, 8 Gubkina street, Moscow 119991, Russia.
}
\newline
\textnormal{HSE University,
Russian Federation,
Laboratory of Mirror Symmetry, NRU HSE, 6 Usacheva str., Moscow, Russia, 119048.
}
\newline
\textnormal{\texttt{victorprz@mi.ras.ru, victorprz@gmail.com}}}

\address{\emph{Constantin Shramov}
\newline
\textnormal{Steklov Mathematical Institute of RAS,
8 Gubkina street, Moscow 119991, Russia.
}
\newline
\textnormal{
HSE University, Russian Federation,
Laboratory of Algebraic Geometry, 6 Usacheva str., Moscow, 119048, Russia.
}
\newline
\textnormal{\texttt{costya.shramov@gmail.com}}}

\thanks{Victor Przyjalkowski was partially supported by Laboratory of Mirror Symmetry NRU HSE, RF Government grant, ag. \textnumero~14.641.31.0001.
Constantin Shramov was supported by the Russian Academic Excellence Project ``5-100'' and by the Foundation for the
Advancement of Theoretical Physics and Mathematics ``BASIS''.
Both authors are Young Russian Mathematics award winners and would like to thank
its sponsors and jury.
}


\begin{document}

\begin{abstract}
We show that smooth well formed weighted complete intersections have finite
automorphism groups, with several obvious exceptions.
\end{abstract}

\maketitle

\section{Introduction}
\label{section:intro}

Studying algebraic varieties, it is important to understand their automorphism groups.
In some particular cases these groups have especially nice structure. For instance,
recall the following classical result due to H.\,Matsumura and P.\,Monsky (cf.~\cite[Lemma 14.2]{KS58}).

\begin{theorem}[{see~\cite[Theorems~1 and~2]{MM63}}]
\label{theorem:MM}
Let $X\subset\P^N$, $N\ge 3$, be a smooth hypersurface of degree  $d\ge 3$.
Suppose that $(N,d)\neq (3,4)$.
Then the group~\mbox{$\Aut(X)$} is finite.
\end{theorem}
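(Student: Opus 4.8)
\section*{Proof proposal}

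The plan is to reduce the problem to linear algebra in two stages: first show that every automorphism of $X$ is induced by a projective linear transformation of $\P^N$, and then show that the resulting algebraic subgroup of $\PGL_{N+1}$ is finite by checking that its Lie algebra vanishes. Throughout I work over an algebraically closed field of characteristic zero.

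For the first stage, write $H=\mathcal{O}_X(1)$ for the hyperplane class, so that by adjunction $K_X=(d-N-1)H$. Any $\phi\in\Aut(X)$ fixes the canonical class, $\phi^*K_X=K_X$. When $d\neq N+1$ the integer $m=d-N-1$ is nonzero, and since $\Pic(X)$ is torsion free (we have $H^1(X,\mathcal{O}_X)=0$, so $\Pic^0(X)=0$, while $H^2(X,\Z)$ is torsion free for a smooth hypersurface), the equality $m\,\phi^*H=m\,H$ forces $\phi^*H=H$. When $d=N+1$ but $N\ge 4$, I instead invoke the Lefschetz hyperplane theorem: since $\dim X\ge 3$ we have $\Pic(X)=\Z\cdot H$, and $\phi^*$ must preserve the ample generator, so again $\phi^*H=H$. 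The only pair $(N,d)$ with $N\ge 3$, $d\ge 3$ escaping both arguments is $(3,4)$, which is exactly the excluded case: there $X$ is a quartic K3 surface, with $\Pic(X)$ possibly of large rank, and its automorphisms need not be linear. In every remaining case $\phi^*H=H$, so $\phi$ acts on $H^0(X,\mathcal{O}_X(1))\cong H^0(\P^N,\mathcal{O}(1))$ (the restriction map is an isomorphism since $d\ge 2$), and is therefore the restriction of an element of $\PGL_{N+1}$. As $X$ is nondegenerate, this identifies $\Aut(X)$ with the stabilizer $G=\{g\in\PGL_{N+1}\mid g(X)=X\}$, a closed, hence algebraic, subgroup of $\PGL_{N+1}$.

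It remains to prove that $G$ is finite. Being an algebraic group of finite type in characteristic zero, $G$ is smooth, so it is finite if and only if its Lie algebra vanishes. Writing $F$ for the degree-$d$ form defining $X$, the normal bundle and Euler sequences identify $H^0(X,T_X)$ with the space of linear vector fields tangent to $X$, namely $\{v\in\mathfrak{pgl}_{N+1}\mid v(F)=\lambda F\text{ for some }\lambda\}$, and this space is precisely $\mathrm{Lie}(G)$. Thus everything reduces to the vanishing $H^0(X,T_X)=0$: no nonzero linear vector field is tangent to a smooth hypersurface of degree $d\ge 3$.

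I expect this last vanishing to be the crux. By Jordan decomposition it suffices to treat $v$ semisimple and $v$ nilpotent separately. In the semisimple case I would diagonalize the coordinates $x_0,\dots,x_N$ with weights $w_0,\dots,w_N$, so that $F$ is a sum of monomials all of weighted degree $\lambda$. Smoothness forces, for each $i$, a monomial of $F$ divisible by $x_i^{d-1}$ (otherwise every partial $\partial F/\partial x_j$ would vanish at the coordinate point $e_i$, making it singular), say $x_i^{d-1}x_{j(i)}$, giving $(d-1)w_i+w_{j(i)}=\lambda$ for all $i$. Viewing $i\mapsto j(i)$ as a self-map of a finite set, on each of its cycles the affine recursion $w_{j(i)}=\lambda-(d-1)w_i$ has ratio $-(d-1)$ with $|d-1|\ge 2$, so it admits only the constant solution $w_i=\lambda/d$; propagating back along the trees feeding into the cycles shows all the $w_i$ are equal, whence $v$ is scalar, i.e. zero in $\mathfrak{pgl}_{N+1}$. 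The nilpotent case, equivalently ruling out a one-parameter unipotent subgroup of $G$, is the more delicate part: it needs an analogous analysis of the Jordan block structure of $v$ against the monomials forced by smoothness, and this (together with making the semisimple bookkeeping fully rigorous) is where the real work lies. Once $H^0(X,T_X)=0$ is established, $G$ is zero dimensional and of finite type, hence finite, and therefore so is $\Aut(X)$.
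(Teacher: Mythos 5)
The paper does not actually prove this statement --- it is quoted as a classical result from \cite{MM63} --- so your proposal must be judged on its own terms. Your strategy is the classical Matsumura--Monsky one (linearize all automorphisms, then show the linear stabilizer $G\subset\PGL_{N+1}$ has trivial Lie algebra), and the parts you carry out are correct: the linearization step (torsion-freeness of $\Pic(X)$ when $d\neq N+1$, Lefschetz when $d=N+1$ and $N\ge 4$, and the restriction isomorphism on linear forms) is complete, and your semisimple analysis is sound, since the affine recursion $w_{j(i)}=\lambda-(d-1)w_i$ has slope of absolute value $d-1\ge 2$, so the cycle-and-tree argument does force all weights to equal $\lambda/d$. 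The genuine gap is the one you flag yourself: the nilpotent case is never proved, and without it you have not excluded a one-parameter unipotent subgroup $\mathbb{G}_a\subset G$, i.e.\ you have not proved finiteness. Moreover, this case is not a matter of ``analogous bookkeeping'': for nilpotent $v$ the eigenvalue in $v(F)=\lambda F$ is automatically zero (a nilpotent linear map induces a nilpotent derivation on degree-$d$ forms), so one must show that no nonzero nilpotent linear vector field annihilates a smooth form of degree $d\ge 3$ --- and here $d\ge 3$ is essential, because smooth quadrics do admit such fields: $v=2x_1\partial_{x_0}+x_2\partial_{x_1}$ kills $x_0x_2-x_1^2$. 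So the weight combinatorics of your semisimple case has no direct analogue, and a genuinely new idea (the Jordan-filtration analysis constituting the hard half of \cite[Theorem~2]{MM63}) is required.

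If you want to close the gap without reproducing that analysis, there are two standard substitutes, both visible in the present paper, but they do not cover everything. For $d\ge N+1$ the canonical class $K_X=(d-N-1)H$ is nef, so a faithful action of $\mathbb{G}_a$ or $\mathbb{G}_m$ would make $X$ uniruled, contradicting Miyaoka--Mori \cite{MM86}; this is exactly the mechanism of Corollary~\ref{corollary:non-ruled}. For $d\ge 4$ one can instead invoke Flenner's vanishing (Theorem~\ref{theorem:Flenner}): for a hypersurface one has $i_X=N+1-d$ and $n=N-1$, so $d\ge 4$ gives $i_X\le n-2$, whence $H^0(X,T_X)=0$ by Serre duality as in Corollary~\ref{corollary:Flenner}, killing the semisimple and nilpotent directions simultaneously. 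Neither route, however, covers smooth cubics ($d=3$, any $N\ge 3$), for which $i_X=n-1$ and $-K_X$ is ample; and note that the paper itself, in Lemma~\ref{lemma:low-coindex}, handles cubics precisely by citing Theorems~\ref{theorem:MM} and~\ref{theorem:Benoist}, so you cannot borrow that step without circularity. Completing the nilpotent case --- at the very least for $d=3$ --- is the work that remains.
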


The following beautiful generalization of Theorem~\ref{theorem:MM} was proved
by O.\,Benoist.

\begin{theorem}[{\cite[Theorem~3.1]{Be13}}]
\label{theorem:Benoist}
Let $X$ be a smooth
complete intersection of dimension at least $2$ in
$\P^N$ that is not contained in a hyperlplane.
Suppose that $X$ does not coincide with~$\P^N$, is not a
quadric hypersurface in $\P^N$, and is not a $K3$ surface.
Then the group~\mbox{$\Aut(X)$} is finite.
\end{theorem}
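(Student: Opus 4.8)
The plan is to realize $\Aut(X)$ as an algebraic group and reduce its finiteness to a single cohomological vanishing. Write $X$ as a complete intersection of multidegree $(d_1,\dots,d_c)$ in $\P^N$ with $n=\dim X=N-c\ge 2$. Since $X$ is not contained in a hyperplane we may assume every $d_i\ge 2$, and then (reading off the Koszul resolution of $\O_X$ on $\P^N$) the restriction map $H^0(\P^N,\O(1))\to H^0(X,\O_X(1))$ is an isomorphism, so the given embedding is exactly the one attached to the complete linear system $|\O_X(1)|$.

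For $n\ge 3$ the Grothendieck--Lefschetz theorem gives $\Pic(X)=\Z\cdot\O_X(1)$. An automorphism must send the ample generator to itself, so it acts linearly on $H^0(X,\O_X(1))$ and is induced by a projective transformation; hence $\Aut(X)$ is a closed subgroup of $\PGL_{N+1}$. Being a linear algebraic group in characteristic zero, it is finite as soon as it is zero-dimensional, that is, as soon as its Lie algebra $H^0(X,T_X)$ vanishes.

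It therefore remains to show $H^0(X,T_X)=0$ unless $X$ is one of the listed exceptions. Here I would combine the Euler sequence with the normal bundle sequence
\[
0\to T_X\to T_{\P^N}|_X\to \bigoplus_{i=1}^c\O_X(d_i)\to 0.
\]
Using $H^1(\O_X)=0$ and $h^0(\O_X(1))=N+1$, the Euler sequence identifies $H^0(T_{\P^N}|_X)$ with $\mathfrak{pgl}_{N+1}$, so $H^0(X,T_X)$ is precisely the space of infinitesimal projective transformations tangent to $X$, i.e. the kernel of the map $\mathfrak{pgl}_{N+1}\to\bigoplus_i H^0(\O_X(d_i))$ obtained by differentiating the defining equations. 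The heart of the matter is to prove this map injective, i.e. that a smooth $X$ is preserved by no nontrivial one-parameter subgroup of $\PGL_{N+1}$. I expect to establish this from the explicit cohomology of line bundles on $X$ (again via the Koszul resolution), which leaves only finitely many numerical types with $H^0(X,T_X)\ne 0$; these turn out to be exactly $\P^N$ and the smooth quadric, which are excluded. This vanishing is the main obstacle, as it is where smoothness and the degree hypotheses are genuinely used.

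Finally, the surface case $n=2$ needs separate attention, since $\Pic(X)$ may then have rank greater than one and automorphisms need not be projective. Complete intersection surfaces are simply connected with $H^1(\O_X)=0$, so they are del Pezzo, K3, or of general type. The K3 case is excluded; surfaces of general type have finite automorphism groups by the classical theorem of Matsumura; and the only nondegenerate complete intersection del Pezzo surfaces are the quadric (excluded), the cubic surface, and the $(2,2)$-intersection in $\P^4$, the latter two having finite automorphism groups. Assembling these cases completes the argument.
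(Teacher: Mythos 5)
First, a point of comparison: the paper does not prove this statement at all --- it is imported verbatim from Benoist \cite[Theorem~3.1]{Be13} and used as a black box (in the proof of Lemma~\ref{lemma:low-coindex}). So your proposal can only be measured against the standard proofs in the literature and against the machinery the paper builds for its weighted generalization. Your reductions are correct and standard: nondegeneracy forces all $d_i\ge 2$; the Koszul resolution gives linear normality, so the embedding is by the complete linear system $|\O_X(1)|$; Grothendieck--Lefschetz gives $\Pic(X)=\Z\cdot\O_X(1)$ for $n\ge 3$, hence $\Aut(X)$ is a closed subgroup of $\PGL_{N+1}(\CC)$, finite if and only if its Lie algebra $H^0(X,T_X)$ vanishes; and the surface case is correctly split into del Pezzo (quadric, cubic, and the $(2,2)$ in $\P^4$, the last two with finite automorphisms), K3 (excluded by hypothesis), and general type (finite $\Aut$ classically).

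The gap is that the one step carrying the entire content of the theorem is asserted rather than proved: the injectivity of $\mathfrak{pgl}_{N+1}\to\bigoplus_i H^0\big(X,\O_X(d_i)\big)$, equivalently the vanishing $H^0(X,T_X)=0$. You write ``I expect to establish this\dots these turn out to be exactly $\P^N$ and the smooth quadric,'' but this kernel is not a formal consequence of line-bundle cohomology on $X$: it asks whether some nonzero degree-zero derivation of $\CC[x_0,\dots,x_N]$ preserves the ideal $(f_1,\dots,f_c)$, and ruling this out for \emph{every} smooth $X$ is exactly the Matsumura--Monsky/Kodaira--Spencer theorem in the hypersurface case and the substance of Benoist's proof in general. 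The alternative cohomological route is to dualize, $H^0(X,T_X)^*\cong H^n\big(X,\Omega^1_X\otimes\omega_X\big)$, and prove a vanishing theorem; this is formal (Kodaira--Akizuki--Nakano) when $\omega_X$ is ample and easy when $\omega_X\cong\O_X$, but in the Fano range --- precisely where the exceptions $\P^N$ and the quadric live --- it requires a genuine computation with the cotangent bundle, not just line bundles. That is exactly how the paper handles its weighted analogue: Flenner's vanishing (Theorem~\ref{theorem:Flenner}) plus Serre duality settles $i_X\le n-2$ (Corollary~\ref{corollary:Flenner}), and the remaining indices $i_X\ge n-1$ are treated by classification and ad hoc arguments (Proposition~\ref{proposition:low-coindex}, Lemma~\ref{lemma:low-coindex}). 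Until you supply a proof of the vanishing, your proposal establishes only the routine reductions, not the theorem.
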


The goal of this paper is to generalize Theorems~\ref{theorem:MM}
and~\ref{theorem:Benoist} to the case of smooth weighted complete intersections.
We refer the reader to~\cite{Do82} and~\cite{IF00} (or to~\S\ref{section:preliminaries} below) for
definitions and basic properties of weighted projective spaces and complete intersections therein.
Our main result is as follows.

\begin{theorem}
\label{theorem:automorphisms}
Let $X$ be a smooth well formed
weighted complete intersection of dimension~$n$.
Suppose that either $n\ge 3$, or $K_X\neq 0$.
Then the group~\mbox{$\Aut(X)$} is finite unless
$X$ is isomorphic either to $\P^n$ or to a quadric hypersurface in~$\P^{n+1}$.
\end{theorem}

Under a minor additional assumption (cf. Definition~\ref{definition:cone}
below) one can make the assertion of
Theorem~\ref{theorem:automorphisms} more precise.

\begin{corollary}
\label{corollary:automorphisms}
Let $X\subset\P$ be a smooth well formed
weighted complete intersection of dimension $n$ that
is not an intersection with a linear cone.
Suppose that either $n\ge 3$, or~\mbox{$K_X\neq 0$}.
Then the group~\mbox{$\Aut(X)$} is finite unless
$X=\P\cong\P^n$ or $X$ is a quadric hypersurface in~\mbox{$\P\cong\P^{n+1}$}.
\end{corollary}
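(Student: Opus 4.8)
The plan is to derive the corollary from Theorem~\ref{theorem:automorphisms}, which is already proved and asserts finiteness of $\Aut(X)$ with exactly two families of exceptions, namely $X\cong\P^n$ and $X$ a quadric in $\P^{n+1}$. Since the corollary's hypotheses are a strict superset of those in the theorem (we additionally assume $X$ is not an intersection with a linear cone), finiteness of $\Aut(X)$ is immediate unless we are in one of the two exceptional cases. Thus the only real content is to upgrade the abstract statements ``$X$ is isomorphic to $\P^n$'' and ``$X$ is isomorphic to a quadric in $\P^{n+1}$'' into the embedded statements ``$X=\P\cong\P^n$'' and ``$X$ is a quadric hypersurface in $\P\cong\P^{n+1}$''. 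The whole point of adding the ``not an intersection with a linear cone'' assumption is that it pins down the ambient weighted projective space $\P$ and the equations of $X$ inside it, eliminating the possibility that $X$ is an abstractly-simple variety presented as a complete intersection in some larger, redundantly-weighted $\P$.

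First I would recall from the preliminaries (\S\ref{section:preliminaries}) what an intersection with a linear cone is: $X\subset\P$ is an intersection with a linear cone if one of the defining equations of $X$ can be taken to be a coordinate of weight one, equivalently if $X$ is cut out using a variable that appears linearly. Excluding this ensures that the weighted projective embedding is, in the appropriate sense, minimal or non-degenerate, so that the numerical invariants of $X$ (dimension, the weights of $\P$, and the degrees of the defining equations) are recovered from the isomorphism type of $X$. I would then invoke the standard classification of when a smooth well formed weighted complete intersection that is not an intersection with a linear cone can be isomorphic to a projective space or to a quadric: the only such presentations are the trivial ones.

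The two cases would be handled in turn. If $X\cong\P^n$, then since $X$ is a well formed weighted complete intersection not arising from a linear cone, comparison of the anticanonical degree, the Picard group, and the weights forces the ambient space to have no complete intersection relations and all weights equal to one; that is, $\P\cong\P^n$ and $X=\P$. If $X$ is isomorphic to a quadric hypersurface, then $X$ has Picard rank one (for $n\ge 2$) and a specific anticanonical class, and the non-linear-cone condition forces the single defining equation to have degree two in an unweighted $\P^{n+1}$, giving $X$ a quadric hypersurface in $\P\cong\P^{n+1}$. In both cases the identification is essentially a bookkeeping exercise matching invariants, using well formedness to rule out degenerate weight data.

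The main obstacle I anticipate is making the passage from the \emph{abstract} isomorphism to the \emph{concrete} embedded description fully rigorous: one must verify that, among all weighted complete intersection presentations of $\P^n$ or of a smooth quadric, the non-linear-cone hypothesis leaves only the standard presentation. This amounts to showing that any weighted projective space $\P$ admitting such an $X$ as a non-degenerate complete intersection must itself be an ordinary projective space of the right dimension. I expect this to follow from the results on weights, degrees, and canonical classes already collected in \S\ref{section:preliminaries}, together with the observation that a nontrivial weight or an extra equation would either contradict smoothness and well formedness or would force $X$ to be an intersection with a linear cone; nevertheless this compatibility check is where the care is needed, and it is the step I would write out most carefully.
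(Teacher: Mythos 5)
You take the paper's decomposition exactly: Theorem~\ref{theorem:automorphisms} gives finiteness outside the two abstract exceptional cases, so the whole content of the corollary is to upgrade ``$X\cong\P^n$'' and ``$X$ is isomorphic to a quadric'' to the embedded statements. However, the step you defer as a ``bookkeeping exercise'' to be ``written out most carefully'' is not a routine check: it is precisely Proposition~\ref{proposition:unique-embedding}, which the paper states in the introduction expressly for this deduction and devotes \S\ref{section:linear-normality} to proving. The paper's proof of the corollary is then one line: since $\P^n$ (viewed as a codimension-$0$ weighted complete intersection in itself) and a quadric in $\P^{n+1}$ are normalized quasi-smooth well formed weighted complete intersections that are not intersections with linear cones, Proposition~\ref{proposition:unique-embedding} applied to the abstract isomorphism forces $N$, $k$, and all the weights and degrees of the given presentation of $X$ to be the standard ones. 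Your proposal neither cites this proposition nor supplies a proof of it, so the central step is missing.

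Moreover, the substitute you sketch would not work as stated. Comparing ``the anticanonical degree, the Picard group, and the weights'' is insufficient; the actual mechanism is that for $\dim X\ge 3$ the sheaf $\O_X(1)$ is intrinsic because it generates $\Cl(X)\cong\ZZ$ (Lemma~\ref{lemma:Okada}), its Poincar\'e series equals $\prod_j (1-t^{d_j})/\prod_i(1-t^{a_i})$ (Proposition~\ref{proposition:Poincare}(i)), and such a rational-function identity with $a_i\neq d_j$ determines all weights and degrees (Lemma~\ref{lemma:ratio}). Your claim that ``a nontrivial weight or an extra equation would either contradict smoothness and well formedness or would force $X$ to be an intersection with a linear cone'' is false as a general principle: for example, the degree-$6$ hypersurface in $\P(1^n,2,3)$ is smooth, well formed, has nontrivial weights, and is not an intersection with a linear cone. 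Nontrivial presentations are excluded only as the \emph{conclusion} of the series comparison for the specific targets $\P^n$ and the quadric. Finally, Proposition~\ref{proposition:unique-embedding} requires $\dim X\ge 3$, so the cases $n\le 2$ of the corollary (allowed when $K_X\neq 0$) need the separate classification of Lemma~\ref{lemma:small-dimension}, cf.\ the remark following the proof of Proposition~\ref{proposition:unique-embedding}; this is a genuine issue because in low dimension $\O_X(1)$ need not generate $\Cl(X)$, which is exactly why uniqueness fails for a conic in $\P^2$. The repair is straightforward --- invoke Proposition~\ref{proposition:unique-embedding} for $n\ge 3$ and Lemma~\ref{lemma:small-dimension} for $n\le 2$ --- but as written your argument leaves the decisive step unproved.
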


Note that if $X$ is not an intersection with a linear cone,
then the assumption of Theorem~\ref{theorem:automorphisms}
is equivalent to the requirement
that $X$ is not one of the weighted complete intersections
listed in Table~\ref{table:K3} below.

We refer the reader to \cite{HMX}, \cite[Theorem~1.1.2]{KPS18}, and \cite[Theorem~1.2]{CPS19} for other results concerning finiteness
of automorphism groups.

Theorem~\ref{theorem:automorphisms} is mostly implied by the results of
\cite{Fle81} (see Theorem~\ref{theorem:Flenner} below). However, some cases are not
covered by \cite{Fle81} and have to be classified and treated separately
(see Proposition~\ref{proposition:low-coindex}(iv)
and Lemma~\ref{lemma:low-coindex}).

To deduce Corollary~\ref{corollary:automorphisms}
from Theorem~\ref{theorem:automorphisms}, we
need the the following assertion that is well known to
experts but for which we did not manage to find a proper reference (and which we find interesting on its own).
We will say that a weighted complete intersection~\mbox{$X\subset\P=\P(a_0,\ldots,a_N)$}
of multidegree~\mbox{$(d_1,\ldots,d_k)$} is \emph{normalized} if the inequalities~\mbox{$a_0\le\ldots\le a_N$} and $d_1\le\ldots\le d_k$ hold.

\begin{proposition}[{cf.~\cite[Lemma~18.3]{IF00}}]
\label{proposition:unique-embedding}
Let $X\subset\P(a_0,\ldots,a_N)$
and~\mbox{$X'\subset\P(a'_0,\ldots,a'_{N'})$}
be normalized quasi-smooth well formed weighted complete intersections
of multidegrees $(d_1,\ldots,d_k)$ and $(d'_1,\ldots,d'_{k'})$, respectively, such that $X$ and~$X'$ are
not intersections with linear cones. Suppose that $X\cong X'$ and $\dim X\ge 3$.
Then~\mbox{$N=N'$}, $k=k'$, $a_i=a'_i$ for every $0\le i\le N$, and $d_j=d'_j$ for every $1\le j\le k$.
\end{proposition}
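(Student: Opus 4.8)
The plan is to recover the integers $N$ and $k$, the weights $a_i$, and the degrees $d_j$ from the isomorphism class of $X$ alone, by exhibiting them as invariants of a graded ring canonically attached to $X$. Throughout, write $S=\CC[x_0,\ldots,x_N]$ with $\deg x_i=a_i$, let $I=(f_1,\ldots,f_k)$ be the ideal generated by the defining equations of $X$ (with $\deg f_j=d_j$), let $\mathfrak m_S$ be the irrelevant maximal ideal of $S$, and set $R=\bigoplus_{m\ge 0}H^0\big(X,\O_X(m)\big)$.

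First I would show that the pair $(X,\O_X(1))$, and hence the graded ring $R$, is intrinsic to $X$. Since $X$ is quasi-smooth and well formed of dimension $\ge 3$, a Lefschetz-type theorem for weighted complete intersections gives $\Cl(X)\cong\Z$, generated by $\O_X(1)$ (the restriction of the positive generator of $\Cl(\P)$). An isomorphism $\varphi\colon X\to X'$ induces an isomorphism $\Cl(X')\to\Cl(X)$ preserving the ample cone, hence carrying the ample generator $\O_{X'}(1)$ to the ample generator $\O_X(1)$; pulling back sections then yields an isomorphism $R(X')\cong R(X)$ of graded $\CC$-algebras. By quasi-smoothness and well-formedness $X$ is projectively normal, so $R\cong S/I$ as graded rings; thus $R$ is simultaneously the abstract invariant above and the homogeneous coordinate ring encoding the $a_i$ and $d_j$.

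It remains to extract the weights and degrees from $R$. Here I would use the hypothesis that $X$ is not an intersection with a linear cone: this means $d_j\neq a_i$ for all $i,j$, so no $f_j$ can contain a single variable as a monomial, i.e. $I\subseteq \mathfrak m_S^2$. Writing $\mathfrak m=R_{\ge 1}$, this gives $\mathfrak m/\mathfrak m^2\cong \mathfrak m_S/\mathfrak m_S^2$, whose basis is the set of classes of $x_0,\ldots,x_N$ in degrees $a_0,\ldots,a_N$; by graded Nakayama, the multiset of degrees of a minimal homogeneous generating set of $R$ is therefore $\{a_0,\ldots,a_N\}$. Resolving $R$ over the polynomial ring on these minimal generators, the $f_j$ form a regular sequence, so the Koszul complex is a minimal free resolution (its differentials have entries in $\mathfrak m_S$ since $I\subseteq\mathfrak m_S^2$); hence the multiset of degrees of a minimal set of relations is $\{d_1,\ldots,d_k\}$. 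Both multisets are thus invariants of the graded ring $R$.

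Applying the same analysis to $X'$ and comparing through $R(X)\cong R(X')$, the minimal-generator degrees and the minimal-relation degrees must agree as multisets, so $\{a_0,\ldots,a_N\}=\{a'_0,\ldots,a'_{N'}\}$ and $\{d_1,\ldots,d_k\}=\{d'_1,\ldots,d'_{k'}\}$. Since $X$ and $X'$ are normalized, both multisets are listed in non-decreasing order, which forces $N=N'$, $k=k'$, $a_i=a'_i$ for all $i$, and $d_j=d'_j$ for all $j$. The two places where the hypotheses are genuinely used are the delicate steps: the identification of $\O_X(1)$ as an intrinsic class relies on $\dim X\ge 3$ (for surfaces $\Cl(X)$ may have larger rank, and the argument breaks), while the inclusion $I\subseteq\mathfrak m_S^2$ relies on $X$ not being a linear cone (otherwise some $x_i$ would be a redundant generator and the weights could not be read off). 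I expect the Lefschetz/Picard computation establishing $\Cl(X)\cong\Z\cdot\O_X(1)$ to be the main point requiring care.
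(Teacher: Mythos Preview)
Your argument is correct and shares its first step with the paper: both use the Lefschetz-type result (the paper's Lemma~\ref{lemma:Okada}) to see that for $\dim X\ge 3$ the class of $\O_X(1)$ is the ample generator of $\Cl(X)$, so the graded section ring $R=\bigoplus_{m\ge 0}H^0(X,\O_X(m))$ is an invariant of $X$. The identification $R\cong S/I$ is exactly what the paper establishes in the course of proving Proposition~\ref{proposition:Poincare}(i); note that this holds for any weighted complete intersection and does not actually require quasi-smoothness or well-formedness, contrary to your attribution.

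Where you diverge is in extracting the numerical data from $R$. The paper reads off only the Hilbert series $P_X(t)=\prod_j(1-t^{d_j})/\prod_i(1-t^{a_i})$ and then proves an elementary combinatorial lemma (Lemma~\ref{lemma:ratio}) showing that, under the hypothesis $a_i\neq d_j$, this rational function determines the multisets $\{a_i\}$ and $\{d_j\}$ by tracking primitive roots of unity. You instead read off the weights as the degrees in $\mathfrak m/\mathfrak m^2$ (using $I\subset\mathfrak m_S^2$, which is exactly where ``not a linear cone'' enters) and the degrees as the shifts in the first step of the minimal free resolution, i.e.\ the graded pieces of $\mathrm{Tor}^S_1(R,\CC)$. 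Your route is more structural and explains conceptually why the ``linear cone'' hypothesis is needed (redundant generators), while the paper's route is more elementary---it needs nothing beyond counting dimensions and a short argument with cyclotomic factors---at the cost of a somewhat ad~hoc lemma. One small remark: minimality of the Koszul complex only needs $f_j\in\mathfrak m_S$, which is automatic; the inclusion $I\subset\mathfrak m_S^2$ is used solely to pin down the generator degrees.
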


When the first draft of this paper was completed,
A.\,Massarenti informed us that a result essentially similar
to Theorem~\ref{theorem:automorphisms} was proved earlier
in \cite[Proposition~5.7]{ACM}.
Note however that in \cite[\S5]{ACM} the authors work
with smooth weighted complete intersections subject to certain
strong additional assumptions (see \cite[Assumptions~5.2]{ACM} for details).

Throughout the paper we work over an algebraically closed field $\CC$ of characteristic zero.

\smallskip
The plan of our paper is as follows.
In~\S\ref{section:preliminaries} we recall some preliminary facts on weighted complete intersections.
In~\S\ref{section:linear-normality} we show uniqueness of presentation of a variety as a weighted complete intersection,
that is, we prove Proposition~\ref{proposition:unique-embedding}. Finally, in~\S\ref{section:automorphisms} we  prove Theorem~\ref{theorem:automorphisms}
and Corollary~\ref{corollary:automorphisms}.

\smallskip
We are grateful to
B.\,Fu, B. van~Geemin, A.\,Kuznetsov, A.\,Massarenti,
and T.\,Sano for useful discussions. 
We also thank the referee for his suggestions regarding the first draft of the
paper.

\section{Preliminaries}
\label{section:preliminaries}

In this section we recall the basic properties of weighted complete intersections. We refer the reader
to~\cite{Do82} and~\cite{IF00} for more details. Some properties of smooth
weighted complete intersections can be also found in the earlier paper~\cite{Mo75}.

Let $a_0,\ldots,a_N$ be positive integers. Consider the graded algebra~\mbox{$\CC[x_0,\ldots,x_N]$},
where the grading is defined by assigning the weights $a_i$ to the variables~$x_i$.
Put
$$
\P=\P(a_0,\ldots,a_N)=\mathrm{Proj}\,\CC[x_0,\ldots,x_N].
$$

The weighted projective space $\P$ is said to be \emph{well formed} if the greatest common divisor of any $N$ of the weights~$a_i$ is~$1$. Every weighted projective space is isomorphic to a well formed one, see~\cite[1.3.1]{Do82}.
A subvariety $X\subset \P$ is said to be \emph{well formed}
if~$\P$ is well formed and
$$
\mathrm{codim}_X \left( X\cap\mathrm{Sing}\,\P \right)\ge 2,
$$
where the dimension of the empty set is defined to be $-1$.

We say that a subvariety $X\subset\P$ of codimension $k$ is a \emph{weighted complete
intersection of multidegree $(d_1,\ldots,d_k)$} if its weighted homogeneous ideal in $\CC[x_0,\ldots,x_N]$
is generated by a regular sequence of $k$ homogeneous elements of degrees $d_1,\ldots,d_k$.
This is equivalent to
the requirement that the codimension of (every irreducible component of)
the variety~$X$
equals the (minimal)
number of generators of the weighted homogeneous ideal of~$X$,
cf.~\cite[Theorem~II.8.21A(c)]{Ha77}.
Note that $\P$ can be thought of as a weighted complete
intersection of codimension $0$ in itself; this gives us a smooth Fano variety if and only if~\mbox{$\P\cong\P^N$}.

\begin{definition}[{see~\cite[Definition 6.3]{IF00}}]
\label{definition: quasi-smoothness}
Let $p\colon \mathbb A^{N+1}\setminus \{0\}\to \P$ be the natural projection to the weighted projective space. A subvariety $X\subset \P$
is called \emph{quasi-smooth} if $p^{-1}(X)$ is smooth.
\end{definition}

Note that a smooth well formed weighted complete intersection is always quasi-smooth,
see~\cite[Corollary~2.14]{PrzyalkowskiShramov-Weighted}.

The following definition describes weighted complete intersections
that are to a certain extent analogous to complete intersections
in a usual projective space that are contained in a hyperplane.

\begin{definition}[{cf. \cite[Definition~6.5]{IF00}}]
\label{definition:cone}
A weighted complete intersection~$X\subset\P$
is said to be \emph{an intersection
with a linear cone} if one has $d_j=a_i$ for some~$i$ and~$j$.
\end{definition}

\begin{remark}
\label{remark:cone}
A general quasi-smooth well formed weighted complete intersection
is isomorphic to a quasi-smooth well formed weighted complete intersection
that is not an intersection with a linear cone,
cf. \cite[Remark~5.2]{PrzyalkowskiShramov-Weighted}.
Note however that this does not hold without the
generality assumption.
For instance, a general weighted complete intersection
of bidegree~\mbox{$(2,4)$} in $\P(1^{n+2},2)$ is isomorphic to
a quartic hypersurface in $\P^{n+1}$, while certain
weighted complete intersections of this type are isomorphic to
double covers of an $n$-dimensional quadric branched over
an intersection with a quartic.
\end{remark}

Given a subvariety $X\subset\P$,
we denote by~$\O_X(1)$ the restriction of the
sheaf~\mbox{$\O_{\P}(1)$} to~$X$, see~\mbox{\cite[1.4.1]{Do82}}.
Note that the sheaf~\mbox{$\O_{\P}(1)$} may be not invertible.
However,
if $X$ is well formed, then $\O_X(1)$ is a well-defined divisorial sheaf on $X$.
Furthermore, if $X$ is well formed and smooth,
then $\O_X(1)$ is a line bundle on~$X$.

\begin{lemma}[{\cite[Remark~4.2]{Okada2}, \cite[Proposition~2.3]{PST}, cf. \cite[Theorem 3.7]{Mo75}}]
\label{lemma:Okada}
Let~$X$ be a quasi-smooth
well formed weighted complete intersection of dimension at least~$3$.
Then the class of the divisorial sheaf $\mathcal{O}_{X}(1)$ generates
the group $\Cl(X)$ of classes of Weil divisors on~$X$. In particular, under the additional assumption that $X$ is smooth,
the class of the line bundle $\mathcal{O}_{X}(1)$ generates
the group~$\Pic(X)$.
\end{lemma}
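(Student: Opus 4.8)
The plan is to pass to the affine cone over $X$ and reduce the statement to the factoriality of that cone, the crucial point being that quasi-smoothness forces the cone to have at worst an isolated singularity at its vertex. Write $R=\CC[x_0,\ldots,x_N]/(f_1,\ldots,f_k)$ for the graded coordinate ring of $X$, where $f_1,\ldots,f_k$ is the regular sequence of degrees $d_1,\ldots,d_k$ defining $X$, and let $C=\Spec R\subset\A^{N+1}$ be the corresponding affine cone, with vertex $v$ given by the irrelevant ideal. By Definition~\ref{definition: quasi-smoothness} the punctured cone $C\setminus\{v\}$ coincides with $p^{-1}(X)$ and is therefore smooth. Since $C$ is cut out by a regular sequence it is a Cohen--Macaulay complete intersection of dimension $n+1\ge 4$; being regular away from the codimension $\ge 4$ point $v$, it satisfies $R_1$ and $S_2$, hence is normal, and its only possible singular point is $v$.

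Next I would invoke the standard relation between the class group of a normal projective variety and that of the cone over an ample divisorial sheaf. The $\mathbb{G}_m$-quotient map $\pi\colon C\setminus\{v\}\to X$ gives an exact sequence
$$
\Z\xrightarrow{\ 1\mapsto[\O_X(1)]\ }\Cl(X)\xrightarrow{\ \pi^\ast\ }\Cl\big(C\setminus\{v\}\big)\to 0,
$$
in which $\pi^\ast[\O_X(1)]$ is trivial because it is trivialised by the tautological (Euler) section on the cone; here well-formedness of $X$ is exactly the input ensuring that the locus of nontrivial stabilisers of the $\mathbb{G}_m$-action meets $C\setminus\{v\}$ in codimension at least $2$, so that $\pi^\ast$ is a well-defined surjection with kernel generated by $[\O_X(1)]$. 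As $v$ has codimension $n+1\ge 2$ in $C$, restriction identifies $\Cl(C)\cong\Cl(C\setminus\{v\})$, so the sequence yields
$$
\Cl(C)\cong\Cl(X)\big/\big(\Z\cdot[\O_X(1)]\big).
$$
Thus the assertion is equivalent to the factoriality of $C$, i.e. to $\Cl(C)=0$.

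For this final step I would appeal to the Grothendieck--Lefschetz theory of local rings (SGA\,2), in the form of Grothendieck's solution of Samuel's conjecture: a local complete intersection whose localisations at all primes of height $\le 3$ are factorial is itself factorial. Since $C$ has an isolated singularity, every prime of height $\le 3$ is distinct from the one defining $v$ (which has height $n+1\ge 4$), so the corresponding localisation is regular and hence factorial; therefore $C$ is factorial and $\Cl(C)=0$. The main obstacle I anticipate is pinning down the precise hypotheses of this factoriality criterion together with the $\mathbb{G}_m$-quotient sequence in the presence of finite stabilisers; note that the dimension bookkeeping is what produces the threshold $\dim X\ge 3$ (equivalently $\dim C\ge 4$), and this is sharp, since the cone over a quasi-smooth $K3$ surface is a three-dimensional complete intersection singularity that is typically not factorial.

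Finally, if $X$ is smooth then Weil and Cartier divisors coincide, so $\Pic(X)=\Cl(X)$; as $\O_X(1)$ is then a genuine line bundle (as recalled just before the lemma), it follows that it generates $\Pic(X)$.
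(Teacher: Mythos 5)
The paper does not actually prove this lemma: it is imported wholesale from \cite[Remark~4.2]{Okada2} and \cite[Proposition~2.3]{PST} (cf.\ \cite[Theorem~3.7]{Mo75}), so the only meaningful comparison is with those sources --- and your argument is essentially the one they use. Namely: quasi-smoothness makes the punctured affine cone $C\setminus\{v\}$ smooth, hence $C$ is a normal Cohen--Macaulay complete intersection whose only singularity is the vertex; the $\mathbb{G}_m$-quotient identifies $\Cl(C)$ with $\Cl(X)/\Z\cdot[\O_X(1)]$ (Demazure--Watanabe theory, with well-formedness guaranteeing that $\O_X(1)$ is the correct generator of the kernel, as you note); and Grothendieck's solution of Samuel's conjecture (SGA~2, Exp.~XI) kills $\Cl(C)$ because $\dim C=n+1\ge 4$ puts the vertex in height at least $4$. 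Your write-up is correct, with one step compressed: Grothendieck's theorem is a statement about the \emph{local} ring of $C$ at $v$, and factoriality of all local rings of $C$ does not by itself give $\Cl(C)=0$ (a Dedekind domain with nontrivial class group is locally factorial); one needs the standard supplement that the class group of a positively graded ring coincides with that of its localization at the irrelevant maximal ideal --- equivalently, every divisor class on the cone is represented by a homogeneous divisorial ideal --- which is exactly how \cite[Theorem~3.7]{Mo75} concludes. This is a citation-level gloss rather than a gap. Your sharpness remark (the cone over a quasi-smooth $K3$ surface is a three-dimensional complete intersection singularity that need not be factorial) and the final identification $\Pic(X)=\Cl(X)$ in the smooth case are both correct.
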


One can describe the canonical class of a weighted complete intersection.
For a weighted complete intersection $X$ of multidegree $(d_1,\ldots,d_k)$
in~$\P$, define
\begin{equation*}\label{eq:i-x}
i_X=\sum a_j-\sum d_i.
\end{equation*}
Let $\omega_X$ be the dualizing sheaf on $X$.

\begin{theorem}[{see~\cite[Theorem 3.3.4]{Do82}, \cite[6.14]{IF00}}]
\label{theorem:adjunction}
Let $X$ be a quasi-smooth
well formed weighted complete intersection.
Then
$$
\omega_X=\O_X\left(-i_X\right).
$$
\end{theorem}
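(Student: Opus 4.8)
The plan is to descend the computation from the punctured affine cone, where ordinary adjunction applies. Let $U=\A^{N+1}\setminus\{0\}$ and let $\tilde X=p^{-1}(X)\subset U$ be the punctured affine cone over $X$, where $p\colon U\to\P$ is the projection from Definition~\ref{definition: quasi-smoothness}. Since $X$ is quasi-smooth, $\tilde X$ is smooth, and since $X$ is a weighted complete intersection of multidegree $(d_1,\ldots,d_k)$, the variety $\tilde X$ is cut out in $U$ by a regular sequence $f_1,\ldots,f_k$ of weighted homogeneous polynomials with $\deg f_j=d_j$. The group $\CC^*$ acts on $U$ by $t\cdot(x_0,\ldots,x_N)=(t^{a_0}x_0,\ldots,t^{a_N}x_N)$, with quotient $\P$; under the correspondence between $\CC^*$-equivariant reflexive sheaves on $\tilde X$ and reflexive sheaves on $X$, the trivial sheaf equipped with a linearization of weight $m$ descends, with the appropriate sign convention, to $\O_X(m)$. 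So it suffices to show that $\omega_{\tilde X}$ is equivariantly trivial of weight $i_X$.

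On $U$ the sheaf $\omega_U$ is trivialized by the form $\Omega=dx_0\wedge\cdots\wedge dx_N$. Because $x_j$ carries $\CC^*$-weight $a_j$, the form $\Omega$ has weight $\sum_j a_j$. I would then apply the adjunction formula on the smooth variety $\tilde X\subset U$, namely $\omega_{\tilde X}\cong\omega_U|_{\tilde X}\otimes\det N_{\tilde X/U}$. The conormal sheaf $N^\vee_{\tilde X/U}$ is freely generated by the classes of $df_1,\ldots,df_k$, which carry weights $d_1,\ldots,d_k$; hence $\det N^\vee_{\tilde X/U}$ has weight $\sum_j d_j$ and $\det N_{\tilde X/U}$ has weight $-\sum_j d_j$. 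Combining these, $\omega_{\tilde X}$ is equivariantly trivial of weight $\sum_j a_j-\sum_j d_j=i_X$.

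It then remains to pass from $\tilde X$ down to $X$. Here is where the hypotheses do their work: the quasi-smooth, well formed assumption guarantees that the codimension of $X\cap\Sing\P$ in $X$ is at least $2$, so both $\omega_X$ and $\O_X(-i_X)$ are reflexive sheaves that agree on the smooth locus $X\setminus\Sing\P$, where the computation above takes place and where $p$ restricts to a genuine quotient by a free $\CC^*$-action. Two reflexive sheaves agreeing away from a closed subset of codimension at least two coincide, yielding $\omega_X\cong\O_X(-i_X)$.

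The main obstacle is precisely this descent step: one must verify that the equivariant dualizing sheaf of the punctured cone descends to the dualizing sheaf $\omega_X$ of $X$ itself (and not merely to some abstract reflexive sheaf), and pin down the sign convention relating $\CC^*$-weights to twists of $\O_X(1)$. This is the point at which well formedness is essential, since it confines the quotient singularities of $X$ to codimension at least two and thereby licenses checking the isomorphism of reflexive sheaves on the smooth locus; the adjunction computation on the cone, by contrast, is entirely routine. An alternative route would first establish $\omega_{\P}\cong\O_{\P}(-\sum_j a_j)$ by a local computation at the quotient singularities of $\P$ and then apply adjunction $k$ times inside $\P$ along the sequence of weighted hypersurfaces cutting out $X$; this, however, still demands the same reflexive-sheaf bookkeeping and so offers no genuine simplification over the cone argument.
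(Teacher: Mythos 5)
The paper does not actually prove this statement: it is imported verbatim from the literature (Dolgachev~\cite[Theorem~3.3.4]{Do82}, Iano-Fletcher~\cite[6.14]{IF00}), so there is no internal proof to compare against. Your strategy --- equivariant adjunction on the punctured affine cone $C_X^*=p^{-1}(X)$, followed by descent along the $\CC^{\times}$-quotient and a reflexivity argument licensed by well formedness --- is the standard geometric route to this result, and it runs parallel to Dolgachev's own argument, which is algebraic rather than geometric: he computes the graded dualizing module of the affine cone ring $S=R/(f_1,\ldots,f_k)$ by graded local duality (for a graded complete intersection one gets $\omega_S\cong S(\sum d_j-\sum a_i)$) and then sheafifies, with well formedness used exactly where you use it, to identify the sheafification of $S(m)$ with $\O_X(m)$ and that of $\omega_S$ with $\omega_X$. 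Your cone computation is the geometric incarnation of that module computation, so the approach is sound and appropriate.

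Two points in your write-up need to be made precise before the proof is complete. First, your sign conventions are internally inconsistent as stated: you declare that a linearization of weight $m$ descends to $\O_X(m)$, and you compute that $\omega_{C_X^*}$ is equivariantly trivial of weight $i_X$; taken together these give $\omega_X=\O_X(+i_X)$, the wrong sign. The correct bookkeeping is that if an equivariant generator $s$ scales by $t^{w}$, then invariant sections are of the form $g\cdot s$ with $g$ homogeneous of degree $-w$, so the descent is $\O_X(-w)$; with this convention your weight computation yields exactly $\O_X(-i_X)$. Second, the step you yourself flag as ``the main obstacle'' --- that the descent of $\omega_{C_X^*}$, with its canonical equivariant structure, is $\omega_{X}$ over the locus where the action is free --- is named but never verified, and it is the one genuinely nontrivial identification in the argument. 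It follows from the observation that the Euler vector field $E=\sum a_j x_j \partial_{x_j}$ is $\CC^{\times}$-invariant and trivializes the relative tangent sheaf of $C_X^*$ over $X\setminus\Sing\P$, so the relative dualizing sheaf is equivariantly trivial of weight zero and $\omega_{C_X^*}\cong p^*\omega_{X\setminus\Sing\P}$ equivariantly; descent then returns $\omega_{X\setminus\Sing\P}$. Finally, to pass from the open locus to all of $X$ you should say why the dualizing sheaf $\omega_X$ (in the Grothendieck sense, as in the statement) is reflexive: this holds because $X$ is normal and Cohen--Macaulay, its affine cone being a complete intersection that is smooth away from the vertex. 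With these repairs your argument is a correct and complete proof.
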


Using the bounds on numerical invariants of smooth weighted complete intersections
found in \cite[Theorem~1.3]{ChenChenChen}, \cite[Theorem~1.1]{PrzyalkowskiShramov-Weighted}, and~\cite[Corollary~5.3(i)]{PST},
one can easily obtain the classically
known lists of all smooth Fano weighted complete intersections of small dimensions. Namely, we
have the following.

\begin{lemma}
\label{lemma:small-dimension}
Let $X$ be a smooth well formed Fano weighted complete intersection of dimension at most $2$
in $\P$ that is not an intersection with a linear cone.
Then $X$ is one of the varieties listed
in Table~\ref{table:dim 12}.
\end{lemma}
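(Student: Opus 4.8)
The plan is to combine the Fano condition with the explicit numerical bounds cited just above in order to reduce the classification to a finite case check. First I would observe that by Theorem~\ref{theorem:adjunction} one has $\omega_X=\O_X(-i_X)$, and that since $X$ is a smooth well formed weighted complete intersection the sheaf $\O_X(1)$ is an ample line bundle on $X$ (being the restriction of the ample class $\O_{\P}(1)$); hence the Fano condition, i.e. that $-K_X$ is ample, is equivalent to the numerical inequality $i_X=\sum_j a_j-\sum_i d_i>0$. I would also use that a smooth well formed weighted complete intersection is automatically quasi-smooth.

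Next I would invoke the bounds of \cite[Theorem~1.3]{ChenChenChen}, \cite[Theorem~1.1]{PrzyalkowskiShramov-Weighted}, and \cite[Corollary~5.3(i)]{PST}: for $n=\dim X\le 2$ these bound the integer $N$, the weights $a_0\le\ldots\le a_N$, and the degrees $d_1\le\ldots\le d_k$ (with $k=N-n$) by explicit constants. Together with the well-formedness restrictions on the greatest common divisors of the weights $a_i$ and with the inequality $i_X>0$, this leaves only finitely many candidate systems $(a_0,\ldots,a_N;d_1,\ldots,d_k)$ to inspect.

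For each surviving candidate I would apply three filters. (1) The requirement that $X$ is not an intersection with a linear cone, that is, $d_j\ne a_i$ for all $i$ and $j$; this discards the presentations that can be re-expressed in a lower-dimensional ambient space or in smaller codimension. (2) Quasi-smoothness of a general member of the given multidegree, which I would verify through the standard combinatorial quasi-smoothness criteria for weighted complete intersections (see \cite{IF00}), testing for the presence of monomials of the appropriate degrees in the relevant variables. (3) Well-formedness, i.e. the condition $\mathrm{codim}_X(X\cap\Sing\P)\ge 2$, together with the genuine smoothness of the general member; for a quasi-smooth $X$ the latter amounts to checking that $X$ avoids the nontrivial quotient singularities of $\P$. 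The candidates passing all three filters are precisely the entries of Table~\ref{table:dim 12}.

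The main obstacle will be the second and third filters: for each admissible numerical type one must decide whether the general complete intersection of that multidegree is quasi-smooth and, if so, whether it is actually smooth rather than merely quasi-smooth. This is where essentially all of the (finite but somewhat delicate) work lies. The case $n=1$ is short, since every smooth Fano curve has genus $0$ and is therefore isomorphic to $\P^1$; for $n=2$ the survivors are del Pezzo surfaces, and one recovers their classical description as weighted complete intersections.
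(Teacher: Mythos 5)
Your proposal is correct and follows essentially the same route as the paper, which gives no separate proof of this lemma but derives it exactly as you do: the cited bounds of \cite[Theorem~1.3]{ChenChenChen}, \cite[Theorem~1.1]{PrzyalkowskiShramov-Weighted}, and \cite[Corollary~5.3(i)]{PST}, combined with the Fano condition $i_X>0$ and well formedness, reduce the classification to a finite combinatorial check whose survivors are the entries of Table~\ref{table:dim 12}. Your identification of where the real work lies (quasi-smoothness and genuine smoothness of the candidates, which are open/closed conditions in the family and so can be tested on general members) matches the ``classically known'' case analysis the paper is implicitly invoking.
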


\begin{center}
\begin{longtable}{||c|c|c|}
\hline
No. & $\P$ & Degrees \\
\hline
\hline
\multicolumn{3}{|c|}{dimension $1$}\\
\hline
\hline
1.1 & $\P^2$ & $2$ \\
\hline
1.2 & $\P^1$ & $\varnothing$ \\
\hline
\hline
\multicolumn{3}{|c|}{dimension $2$}\\
\hline
\hline
2.1 & $\P(1^2,2,3)$ & $6$  \\
\hline
2.2 & $\P(1^3,2)$ & $4$  \\
\hline
2.3 & $\P^3$ & $3$ \\
\hline
2.4 & $\P^4$ & $2,2$ \\
\hline
2.5 & $\P^3$ & $2$ \\
\hline
2.6 & $\P^2$ & $\varnothing$ \\
\hline
\caption[]{Fano weighted complete intersections in dimensions
$1$ and $2$}\label{table:dim 12}
\end{longtable}
\end{center}

\begin{remark}\label{remark:dP}
Let $X$ be a smooth well formed Fano weighted complete intersection of dimension $2$.
If we do not assume that $X$ is not an intersection with a linear cone, we cannot
use the classification provided by Lemma~\ref{lemma:small-dimension}.
However, Lemma~\ref{lemma:small-dimension} applied together with Remark~\ref{remark:cone}
shows that if $i_X=1$, then $X$ is a del Pezzo surface of (anticanonical) degree at
most $4$.
\end{remark}

Recall that the \emph{Fano index} of a Fano variety $X$ is defined as the maximal integer $m$ such that the canonical
class $K_X$ is divisible by $m$ in the Picard group of $X$.
Theorem~\ref{theorem:adjunction}
and Lemmas~\ref{lemma:Okada} and~\ref{lemma:small-dimension}
imply the following.

\begin{corollary}\label{corollary:index}
Let $X$ be a smooth Fano
well formed weighted complete intersection of dimension at least $2$.
Then the Fano index of~$X$ equals~$i_X$.
\end{corollary}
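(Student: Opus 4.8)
The plan is to read off the Fano index directly from the adjunction formula once the Picard group is understood. By Theorem~\ref{theorem:adjunction} one has $\omega_X=\O_X(-i_X)$, so in $\Pic(X)$ the canonical class satisfies $K_X=-i_X\,[\O_X(1)]$. Since $X$ is Fano, $-K_X=i_X\,[\O_X(1)]$ is ample; in particular $\O_X(1)$ is ample and $i_X>0$. Because $\Pic(X)$ is torsion free (as $X$ is Fano), the largest integer dividing $K_X$ equals $i_X$ multiplied by the divisibility of $[\O_X(1)]$ in $\Pic(X)$. Thus the whole statement reduces to showing that the class of $\O_X(1)$ is primitive, i.e.\ not a nontrivial multiple of another class.

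When $\dim X\ge 3$ this is immediate: Lemma~\ref{lemma:Okada} asserts that $[\O_X(1)]$ generates $\Pic(X)$, so $\Pic(X)$ is cyclic; as $\O_X(1)$ is ample it is non-torsion, whence $\Pic(X)\cong\Z$ with $\O_X(1)$ a generator. Hence $[\O_X(1)]$ is primitive and the Fano index equals $i_X$. This argument uses neither the classification nor any exclusion of intersections with a linear cone.

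The surface case $\dim X=2$ is where the real work lies, since Lemma~\ref{lemma:Okada} is stated only in dimension at least $3$ and indeed fails for del Pezzo surfaces of Picard rank greater than one, where $\O_X(1)$ need not generate $\Pic(X)$. Here I would argue through the classification instead. After replacing the given presentation by one that is not an intersection with a linear cone, which affects neither the intrinsic Fano index nor the value of $i_X$ (cf.\ Remark~\ref{remark:cone}), Lemma~\ref{lemma:small-dimension} identifies $X$ with one of the entries of Table~\ref{table:dim 12}, and it remains to check in each case that the known Fano index agrees with the tabulated $i_X$: the surface $\P^2$ has $i_X=3$ and index $3$, the quadric surface in $\P^3$ has $i_X=2$ and index $2$, and the four remaining del Pezzo surfaces, of anticanonical degree at most $4$ (cf.\ Remark~\ref{remark:dP}), all have $i_X=1$ and index $1$; equivalently, $[\O_X(1)]$ is primitive in each case. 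The main obstacle is precisely this reduction to the non-cone presentation for a fixed surface together with its compatibility with smoothness, well-formedness, and the invariant $i_X$; once that is secured, the remaining verification is a short finite check.
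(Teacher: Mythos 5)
Your argument for $\dim X\ge 3$ is exactly the paper's: combine Theorem~\ref{theorem:adjunction} with Lemma~\ref{lemma:Okada}, so that $[\O_X(1)]$ is a generator of $\Pic(X)\cong\Z$ and $K_X=-i_X[\O_X(1)]$ forces the index to be $i_X$. That part is complete and correct.

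The surface case, however, has a genuine gap, and it sits precisely where you yourself flag ``the main obstacle.'' You replace the given presentation of the \emph{fixed} surface $X$ by one that is not an intersection with a linear cone, citing Remark~\ref{remark:cone}. But Remark~\ref{remark:cone} guarantees such a replacement only for a \emph{general} member of the family, and it explicitly warns that the claim fails without the generality assumption: a particular smooth weighted complete intersection with $d_j=a_i$ need not be isomorphic to any non-cone weighted complete intersection (the degree-$d_j$ equation may simply not involve the weight-$a_i$ variable, as in the $(2,4)$ example in $\P(1^{n+2},2)$, where special smooth members are double covers of a quadric rather than quartics). So the reduction you rely on does not exist for an arbitrary fixed $X$, and nothing in your proposal secures it. The paper closes exactly this hole with a deformation argument you are missing: both the Fano index and $i_X$ are constant in the family of smooth weighted complete intersections of the given multidegree in the given weighted projective space (the index because $\Pic$ with its canonical class is locally constant in a smooth family of Fano surfaces, and $i_X$ because it is determined by the numerical data $a_i$, $d_j$). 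One may therefore replace $X$ by a general member of its family, to which Remark~\ref{remark:cone} genuinely applies, and only then invoke the classification of Lemma~\ref{lemma:small-dimension}. With that deformation step inserted, your finite check against Table~\ref{table:dim 12} goes through as you describe.
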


\begin{proof}
Suppose that $\dim X=2$.
Note that the Fano index is constant in the family of smooth weighted complete intersections
of a given multidegree in a given weighted projective space.
Similarly, $i_X$ is constant in such a family.
Thus, by Remark~\ref{remark:cone}
we may assume that~$X$ is not an intersection with a linear cone.
Now the assertion follows from the classification
provided in Lemma~\ref{lemma:small-dimension}.

If $\dim X\ge 3$, then we apply
Theorem~\ref{theorem:adjunction} together with Lemma~\ref{lemma:Okada}.
\end{proof}

Note that the assertion of Corollary~\ref{corollary:index} fails in dimension $1$:
if $X$ is a conic in $\P^2$, then~\mbox{$i_X=1$}, while the Fano index of $X$ equals~$2$.

\begin{lemma}
\label{lemma:extension}
Let $X\subset\P(a_0,\ldots,a_N)$ be a smooth well formed weighted complete
intersection of multidegree $(d_1,\ldots,d_k)$. Then
a general weighted complete intersection $X'$
of multidegree $(d_1,\ldots,d_k)$ in
$\P(1,a_0,\ldots,a_N)$ is smooth and well formed,
and $i_{X'}=i_X+1$.
\end{lemma}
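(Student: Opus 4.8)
The plan is to realise $\P(a_0,\ldots,a_N)$ as the hyperplane $\{y=0\}$ inside $\P'=\P(1,a_0,\ldots,a_N)$, where $y$ is the new coordinate of weight $1$, and to compare $X'$ with its section $X''=X'\cap\{y=0\}$. The numerical statement is immediate: since $i_{X'}=\left(1+\sum a_j\right)-\sum d_i=i_X+1$, only smoothness and well-formedness need work. First I would record two structural facts. The space $\P'$ is well formed, because one of its weights equals $1$, so the greatest common divisor of any $N+1$ of its weights is $1$. Moreover every point of $\P'$ with $y\ne0$ is nonsingular, so $\Sing\P'=\Sing\P$ lies inside $\{y=0\}$; consequently $X'\cap\Sing\P'=X''\cap\Sing\P$.

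Next I would write the defining equations of $X'$ as $g_i=q_{i,0}(x)+y\,q_{i,1}(x)+\ldots$ of degree $d_i$, so that $q_{i,0}=g_i|_{y=0}$ cut out $X''$. For a general $X'$ the forms $q_{i,0}$ are general of degrees $d_i$, whence $X''$ is a general weighted complete intersection of the same multidegree in $\P$; since $X$ is smooth, hence quasi-smooth by \cite[Corollary~2.14]{PrzyalkowskiShramov-Weighted}, openness of quasi-smoothness makes $X''$ quasi-smooth, and as $X$ is well formed the bound $\dim(X''\cap\Sing\P)\le\dim(X\cap\Sing\P)\le\dim X-2$ shows $X''$ is well formed too. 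I would then prove quasi-smoothness of $X'$ chart by chart. On $\{y\ne0\}$ the coordinate $y$ trivialises the $\mathbb{G}_m$-action, so this chart is a smooth affine space $\A^{N+1}$ on which the restricted equations carry a free constant term, namely the coefficient of $y^{d_i}$; the associated linear system is therefore base point free, and Bertini gives smoothness of a general $X'$ there. Along $\{y=0\}$ I would compute the Jacobian of $(g_1,\ldots,g_k)$ at a cone point $(0,x)$: its $x$-block is $\bigl(\partial q_{i,0}/\partial x_j\bigr)$, which already has rank $k$ on the affine cone over $X''$ by quasi-smoothness of $X''$. Thus $X'$ is quasi-smooth everywhere, and its well-formedness is then formal: since $\dim X'=\dim X+1$, one gets $\mathrm{codim}_{X'}(X'\cap\Sing\P')\ge(\dim X+1)-(\dim X-2)=3$.

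It remains to upgrade quasi-smoothness to genuine smoothness, and this is the step I expect to be the main obstacle, because a quasi-smooth well formed weighted complete intersection need not be smooth where it meets $\Sing\P'$. The mechanism I would exploit is that the extra weight-one coordinate $y$ supplies exactly one additional direction transverse to $\Sing\P'$, of weight coprime to every local stabiliser. Concretely, at a point $q\in X''\cap\Sing\P$ with stabiliser $\mu_e$, smoothness of $X'$ is controlled by the rank of the Jacobian of the $g_i$ restricted to the directions of nonzero $\mu_e$-weight, which now include the $y$-direction with coefficient $q_{i,1}(x)$; smoothness of $V/\mu_e$ amounts to this restricted Jacobian having corank at most one. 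For $X$ the analogous condition among the $x$-directions is exactly what smoothness of $X$ provides, and since $X'$ has one more transverse direction the required rank increases by one, the missing unit being supplied by the $y$-column. The delicate point, and the heart of the argument, is to show that for a general $X'$ this $y$-column genuinely raises the rank — equivalently, that smoothness and well-formedness of $X$ are incompatible with the degenerate configurations in which $d_i-1$ fails to be representable and $y$ would not help — so that $X'$ becomes smooth along the whole of $\Sing\P'$. Once this local analysis is carried out, combining it with the smoothness already established on $\{y\ne0\}$ shows that a general $X'$ is smooth.
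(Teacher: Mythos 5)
Most of your proposal is sound: the index computation, the identification $\Sing\P'=\Sing\P\subset\{y=0\}$, the Bertini argument on the chart $\{y\neq 0\}$, the Jacobian block argument giving quasi-smoothness of $X'$ along $\{y=0\}$, and the semicontinuity arguments for well-formedness are all correct. But the proof is incomplete exactly where you say it is: you never establish smoothness of $X'$ at the points of $X'\cap\Sing\P'$, and you explicitly defer ``the heart of the argument.'' Worse, the mechanism you propose for that step cannot work. At a point $q\in X''\cap\Sing\P$ with stabilizer $\mu_e$, $e>1$, the local model of $X'$ is $(V\oplus\chi)/\mu_e$, where $V$ is the slice representation of $X''$ at $q$ and $\chi$ is the weight-one character coming from the $y$-direction; since $\chi$ is a \emph{faithful} character of $\mu_e$, any $g\neq 1$ has $\chi(g)\neq 1$, so the fixed space of $g$ in $V\oplus\chi$ lies in $V$, and $g$ is a pseudo-reflection on $V\oplus\chi$ only if $g$ acts trivially on $V$. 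By Chevalley--Shephard--Todd, $(V\oplus\chi)/\mu_e$ is then smooth if and only if $\mu_e$ acts \emph{trivially} on $V$. In other words, the extra $y$-column can never ``raise the rank'' and repair a nontrivial stabilizer action; adding a faithful transverse character makes the quotient more singular, not less. So no genericity argument along the lines you sketch can close the gap.

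What actually closes it is a structural fact you never invoke: for a quasi-smooth \emph{well formed} weighted complete intersection one has $\Sing X = X\cap\Sing\P$ (well-formedness forbids the stabilizer from acting on the slice through pseudo-reflections, since their fixed hyperplanes would produce a divisor of $X$ inside $\Sing\P$, contradicting $\codim_X(X\cap\Sing\P)\ge 2$; hence the quotient is singular wherever the stabilizer is nontrivial). Consequently your hypothesis that $X$ is smooth and well formed forces $X\cap\Sing\P=\varnothing$. By properness and semicontinuity the general $X''$ also satisfies $X''\cap\Sing\P=\varnothing$, hence the general $X'$ satisfies $X'\cap\Sing\P'=X''\cap\Sing\P=\varnothing$: there are simply no points where your ``delicate'' local analysis is needed. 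Smoothness of $X'$ then follows from the quasi-smoothness you already proved, since away from $\Sing\P'$ quasi-smoothness and smoothness coincide, and well-formedness of $X'$ becomes vacuous. (The paper itself gives no argument --- its proof reads ``Straightforward'' --- and this empty-intersection observation is precisely what makes it so.)
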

\begin{proof}
Straightforward.
\end{proof}

For the converse of Lemma~\ref{lemma:extension}, see \cite[Theorem~1.2]{PST}.

Similarly to Lemma~\ref{lemma:small-dimension}, we can classify
three-dimensional smooth well formed Fano weighted complete intersections that
are not intersections with a linear cone
(see~\cite[Table 2]{PSh18}). This together with
Lemma~\ref{lemma:extension} allows us to
classify smooth well formed weighted complete intersections of
dimension up to $2$ with trivial canonical class.

\begin{lemma}
\label{lemma:K3}
Let $X$ be a smooth well formed
weighted complete intersection of dimension at most $2$
in $\P$ that is not an intersection with a linear cone. Suppose that $K_X=0$.
Then~$X$ is one of the varieties listed
in Table~\ref{table:K3}.
\end{lemma}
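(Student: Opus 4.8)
The plan is to classify smooth well formed weighted complete intersections $X$ of dimension $n\le 2$ with $K_X=0$ that are not intersections with a linear cone, by exploiting the constraint $i_X=0$ together with the numerical bounds quoted earlier in the paper. The condition $K_X=0$ means precisely $\omega_X=\O_X$, which by Theorem~\ref{theorem:adjunction} is equivalent to $i_X=0$, i.e.
\begin{equation*}
\sum_{i=0}^N a_i=\sum_{j=1}^k d_j.
\end{equation*}
For $n=1$ this forces $X$ to be an elliptic curve, and for $n=2$ it forces $X$ to be a $K3$ surface (possibly with the weighted projective space structure making it one of the famous ``$95$ families'' in the hypersurface case, together with the analogous complete-intersection cases). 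So the task is genuinely a finite classification problem under the single Diophantine balancing condition above.

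The main device I would use is the reduction already set up in the excerpt. First I would pass from the $K_X=0$ situation to a Fano situation by invoking Lemma~\ref{lemma:extension} in reverse: a smooth well formed weighted complete intersection of dimension $n$ with $i_X=0$ arises (via \cite[Theorem~1.2]{PST}, cited right after Lemma~\ref{lemma:extension}) from one of dimension $n+1$ with $i_{X'}=1$, i.e.\ a \emph{Fano} weighted complete intersection of index $1$ in one higher dimension. Concretely, for $n=1$ I would relate the elliptic curves to the dimension-$2$ del Pezzo surfaces of index $1$, and for $n=2$ I would relate the $K3$ surfaces to the dimension-$3$ Fano threefolds of index $1$. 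The latter are exactly the varieties classified in \cite[Table~2]{PSh18}, which the text invokes explicitly. Thus the strategy is: take the (finite) list of three-dimensional smooth well formed Fano weighted complete intersections of index $1$ that are not intersections with a linear cone, and for each entry read off the corresponding lower-dimensional member by deleting a weight-$1$ coordinate (the inverse of the extension in Lemma~\ref{lemma:extension}); this produces precisely Table~\ref{table:K3}.

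After assembling the candidate list this way, I would verify that each candidate is genuinely smooth, well formed, and not an intersection with a linear cone, and that conversely every smooth well formed $X$ with $K_X=0$ and $n\le 2$ (not a linear cone intersection) appears on it. The smoothness and well-formedness for a general member of each family follow from Lemma~\ref{lemma:extension} and the quoted results of \cite{PST} and \cite{PSh18}; the converse completeness is exactly what the classification of the dimension-$(n+1)$ Fano families guarantees, since by Corollary~\ref{corollary:index} the Fano index of each such threefold equals $i_{X'}=1$, matching the index-$1$ entries of \cite[Table~2]{PSh18}.

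The main obstacle I anticipate is not any deep geometry but the bookkeeping of the correspondence between the $i_X=0$ cases and the index-$1$ Fano cases one dimension up, together with checking that the linear-cone condition is preserved under this passage. In particular one must be careful that deleting a weight-$1$ variable does not silently turn a non-cone intersection into a cone intersection or vice versa, and that the list in \cite[Table~2]{PSh18} is stated under matching hypotheses (well formed, not a linear cone). Once this dictionary is fixed, the classification is a finite and essentially mechanical transcription, so the proof reduces to citing Lemma~\ref{lemma:extension}, \cite[Theorem~1.2]{PST}, and \cite[Table~2]{PSh18}, and then writing down Table~\ref{table:K3}.
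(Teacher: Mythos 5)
Your proposal is correct and follows essentially the same route as the paper: the paper's justification is precisely to combine Lemma~\ref{lemma:extension} (adding a weight-$1$ variable raises $i_X$ by one) with the classification of index-one smooth well formed Fano weighted complete intersections one dimension up, namely Lemma~\ref{lemma:small-dimension} for the elliptic-curve cases and \cite[Table~2]{PSh18} for the $K3$ cases, and then transcribe the lists by deleting a weight-$1$ coordinate. The only cosmetic difference is that your appeal to \cite[Theorem~1.2]{PST} is not really needed for the logic (applying Lemma~\ref{lemma:extension} directly to $X$ already places the extended variety in the Fano tables), and your caution about the linear-cone condition is easily settled since a weighted complete intersection that is not an intersection with a linear cone has all $d_j\ge 2$, so adjoining a weight equal to $1$ cannot create a coincidence $d_j=a_i$.
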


\begin{center}
\begin{longtable}{||c|c|c|}
\hline
No. & $\P$ & Degrees \\
\hline
\hline
\multicolumn{3}{|c|}{dimension $1$}\\
\hline
\hline
1.1 & $\P(1,2,3)$ & $6$ \\
\hline
1.2 & $\P(1,1,2)$ & $4$ \\
\hline
1.3 & $\P^2$ & $3$ \\
\hline
1.4 & $\P^3$ & $2,2$ \\
\hline
\hline
\multicolumn{3}{|c|}{dimension $2$}\\
\hline
\hline
2.1 & $\P(1^3,3)$ & $6$  \\
\hline
2.2 & $\P^3$ & $4$  \\
\hline
2.3 & $\P^4$ & $2,3$ \\
\hline
2.4 & $\P^5$ & $2,2,2$ \\
\hline
\caption[]{Calabi--Yau weighted complete intersections in dimensions $1$ and $2$}\label{table:K3}
\end{longtable}
\end{center}

\begin{remark}
\label{remark:families}
Note that each of the four families of elliptic curves listed in Table~\ref{table:K3} in fact contains all elliptic curves up to isomorphism.
This is not the case for $K3$ surfaces. For instance, a general member of the family~2.2 in Table~\ref{table:K3} does not appear in the family~2.1 due to degree reasons.
\end{remark}

\section{Uniqueness of embeddings}
\label{section:linear-normality}

In this section we prove Proposition~\ref{proposition:unique-embedding}.
Let us start with a couple of facts that are well known
and can be proved similarly to their
analogs for complete intersections in the usual projective space. However,
we provide their proofs for the reader's convenience.

The proof of the following was suggested to us by A.\,Kuznetsov.

\begin{lemma}
\label{fact: splitting}
Let $X\subset\P=\P(a_0,\ldots,a_N)$ be a subvariety. 
Let $C_X^*$ be a complement of the affine cone over $X$ to its vertex, and let~\mbox{$p\colon C_X^*\to X$} be the projection.
Then one has
$$
p_*(\O_{C^*_X})=\bigoplus\limits_{m\in\ZZ} \O_X(m).
$$
\end{lemma}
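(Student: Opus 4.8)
The plan is to reduce the statement to an explicit computation on the standard affine charts of $X$, where both sides become the natural $\ZZ$-grading of a localized coordinate ring. Write $S=\CC[x_0,\ldots,x_N]$ with $\deg x_i=a_i$, let $I\subset S$ be the homogeneous ideal of $X$, and set $\bar S=S/I$, so that the affine cone over $X$ is $\Spec\bar S$ and $C^*_X$ is its complement to the vertex $\{x_0=\ldots=x_N=0\}$. Cover $X$ by the affine charts $X_i=X\cap\{x_i\ne 0\}$. I would first check that $p$ is an affine morphism: since the vertex lies in none of the loci $\{x_i\ne 0\}$,
$$
p^{-1}(X_i)=\{x_i\ne 0\}\cap C^*_X=\Spec\bar S[x_i^{-1}],
$$
which is affine over $X_i=\Spec\big(\bar S[x_i^{-1}]\big)_0$. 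Hence $p_*\O_{C^*_X}$ is a quasi-coherent sheaf whose module of sections over $X_i$ is $\bar S[x_i^{-1}]$.

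Next I would use the $\CC^*$-action on $\A^{N+1}$ given by $t\cdot(x_0,\ldots,x_N)=(t^{a_0}x_0,\ldots,t^{a_N}x_N)$: it preserves $C^*_X$, the projection $p$ is the quotient map for this action, and $p$ is $\CC^*$-invariant. Consequently $p_*\O_{C^*_X}$ carries a $\ZZ$-grading which over each chart $X_i$ is exactly the weight decomposition
$$
\bar S[x_i^{-1}]=\bigoplus_{m\in\ZZ}\big(\bar S[x_i^{-1}]\big)_m .
$$
This yields $p_*\O_{C^*_X}=\bigoplus_{m\in\ZZ}\FF_m$, where $\FF_m$ is the quasi-coherent subsheaf of sections of weight $m$, satisfying $\FF_m(X_i)=\big(\bar S[x_i^{-1}]\big)_m$.

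It then remains to identify $\FF_m$ with $\O_X(m)$. Recall that $\O_\P(m)=\widetilde{S(m)}$ has module of sections $\big(S[x_i^{-1}]\big)_m$ over $\{x_i\ne 0\}\subset\P$; restricting to $X$ and using that taking the degree-$m$ part commutes with localization at $x_i$ and with passing to the quotient by $I$, one checks that $\O_X(m)$ has sections $\big(\bar S[x_i^{-1}]\big)_m$ over $X_i$. Thus $\FF_m|_{X_i}\cong\O_X(m)|_{X_i}$, and since both are induced by the single $\ZZ$-grading of the localizations $\bar S[(x_ix_j)^{-1}]$, these isomorphisms agree on the overlaps $X_i\cap X_j$ and glue to a global isomorphism $p_*\O_{C^*_X}\cong\bigoplus_{m}\O_X(m)$. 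I expect the one delicate point to be precisely this final identification: because $\O_\P(m)$ need not be a line bundle on a weighted projective space, one cannot argue as for an ordinary projective space, and must instead verify chart by chart that the restriction of $\O_\P(m)$ to $X$ is computed by the graded pieces $\big(\bar S[x_i^{-1}]\big)_m$ rather than by a naive tensor product.
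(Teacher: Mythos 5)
Your argument is correct, and it takes a genuinely different route from the paper's. The paper argues globally and formally: it realizes $U=\A^{N+1}\setminus\{0\}$ as a relative spectrum $\Spec_{\P}\bigl(\bigoplus_{m\in\ZZ}\O_{\P}(m)\bigr)$, using the weighted blow-up $\Spec_{\P}\bigl(\bigoplus_{m\ge 0}\O_{\P}(m)\bigr)\to\A^{N+1}$ of the origin, and then base-changes along $X\hookrightarrow\P$: the fibred product $C^*_X\cong X\times_{\P}U$ becomes $\Spec_X\bigl(\bigoplus_{m\in\ZZ}\O_X(m)\bigr)$ because $\O_{\P}(m)|_X=\O_X(m)$ ``by definition'', and pushing forward the structure sheaf gives the claim. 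You instead compute on charts: $p$ is affine with $p^{-1}(X_i)=\Spec\,\bar S[x_i^{-1}]$, the grading of the localization decomposes $p_*\O_{C^*_X}$ into weight pieces, and each piece is identified with $\O_X(m)$. The comparison hinges on what $\O_X(m)$ means, which is exactly the delicate point you flag at the end. With the definition of the cited reference \cite[1.4.1]{Do82}, $\O_X(m)$ is the sheaf $\widetilde{\bar S(m)}$ whose sections over $X_i$ are $(\bar S[x_i^{-1}])_m$, so your final identification is essentially immediate and your proof works for an arbitrary subvariety. The paper's one-line base change, by contrast, silently uses that the scheme-theoretic fibre product $X\times_{\P}U$ equals the punctured cone and that the pullback $\O_{\P}(m)\otimes_{\O_{\P}}\O_X$ agrees with $\widetilde{\bar S(m)}$; for $X$ not well formed (e.g.\ meeting $\Sing\P$ in codimension one in $X$) the naive pullback can acquire torsion and the fibre product can be non-reduced, so these identifications themselves require the kind of graded-ring verification you carry out. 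In short, the paper's proof is shorter and coordinate-free, while yours is more elementary, pins down the definition unambiguously, and isolates where the actual content lies; the price is the chart-by-chart bookkeeping and the overlap compatibility, which the relative-Spec formalism gives for free.
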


\begin{proof}
Denote the polynomial ring $\CC[x_0,\ldots, x_N]$ by $R$ and let
$$
U=\big(\Spec\, R\big)\setminus \{0\}\cong\A^{N+1}\setminus\{0\}.
$$
Let
$$
Y=\Spec_\PP\left(\bigoplus\limits_{m\ge 0} \O_{\PP}(m)\right)
$$
be the relative spectrum.
Since $R\cong\Gamma\left(\bigoplus_{m\ge 0} \O_\PP(m)\right)$,
one obtains the map
$$
Y\to \Spec\, R\cong\A^{N+1}
$$
which is a weighted blow up of the origin
(with weights $a_0,\ldots,a_N$). Cutting out the origin one gets the isomorphism
$$
U\cong \Spec_\PP \left(\bigoplus_{m\in\ZZ} \O_{\PP}(m)\right).
$$
Consider the fibered product
$$
C_X^*\cong X\times_{\PP} U.
$$
Taking into account that~\mbox{$\O_\PP(m)|_X=\O_X(m)$}
by definition, we
obtain an isomorphism
$$
C_X^*\cong \Spec_X \left(\bigoplus_{m\in\ZZ} \O_{X}(m)\right),
$$
and the assertion of the lemma
follows.
\end{proof}

For a subvariety $X\subset\P(a_0,\ldots,a_N)$, we define
the Poincar\'e series
$$
P_X(t)=\sum\limits_{m\ge 0} h^0(X,\O_X(m)) t^m.
$$
The proof of the following fact was kindly shared with us by T.\,Sano.

\begin{proposition}[{see \cite[Theorem~3.4.4]{Do82}, \cite[Lemma 2.4]{PST}, \cite[Theorem~3.2.4(iii)]{Do82}}]
\label{proposition:Poincare}
Let $X\subset\P(a_0,\ldots,a_N)$ be a
weighted complete intersection
of mul\-ti\-deg\-ree~\mbox{$(d_1,\ldots,d_k)$}.
The following assertions hold.
\begin{itemize}
\item[(i)]
One has
$$
P_X(t)=\frac{\prod_{j=1}^k (1-t^{d_j})}{\prod_{i=0}^N (1-t^{a_i})}.
$$

\item[(ii)]
One has $H^i(X, \O_X(m))=0$ for all $m\in\ZZ$ and all~\mbox{$0<i<\dim X$}.
\end{itemize}
\end{proposition}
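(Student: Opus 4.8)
The plan is to reduce both statements to properties of the affine cone $C_X=\Spec(R/I)$, where $R=\CC[x_0,\ldots,x_N]$ and $I=(f_1,\ldots,f_k)$ is generated by the defining regular sequence with $\deg f_j=d_j$. The bridge between the cohomology of the twisted sheaves $\O_X(m)$ and the geometry of the cone is provided by Lemma~\ref{fact: splitting}. First I would record that the projection $p\colon C_X^*\to X$ is an affine morphism, being the base change to $X$ of the affine map $U=\A^{N+1}\setminus\{0\}\to\P$ (each standard chart $\{x_i\neq 0\}$ of $\P$ has affine preimage). Hence $R^q p_*\O_{C_X^*}=0$ for $q>0$, and combining the Leray spectral sequence with Lemma~\ref{fact: splitting} gives, for every $i$,
$$
H^i(C_X^*,\O_{C_X^*})=H^i\Big(X,\bigoplus_{m\in\ZZ}\O_X(m)\Big)=\bigoplus_{m\in\ZZ}H^i(X,\O_X(m)).
$$

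Next I would exploit that $S=R/I$ is a complete intersection ring, hence Cohen--Macaulay, of dimension $\dim X+1$. Writing $\mathfrak m$ for the irrelevant maximal ideal (the vertex of the cone) and using the local cohomology exact sequence for the pair $(C_X,\{0\})$ together with the affineness of $C_X=\Spec S$, one gets $H^i(C_X^*,\O)\cong H^{i+1}_{\mathfrak m}(S)$ for $i\ge 1$, as well as the four-term sequence
$$
0\to H^0_{\mathfrak m}(S)\to S\to H^0(C_X^*,\O)\to H^1_{\mathfrak m}(S)\to 0.
$$
Cohen--Macaulayness forces $H^j_{\mathfrak m}(S)=0$ for $j\neq \dim X+1$. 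For part~(ii), when $0<i<\dim X$ we have $i+1\neq \dim X+1$, so $H^{i+1}_{\mathfrak m}(S)=0$, and therefore $H^i(X,\O_X(m))=0$ for all $m$, being a graded piece of the left-hand side of the displayed isomorphism. For part~(i), assuming $\dim X\ge 1$ so that $\dim X+1\ge 2$, both $H^0_{\mathfrak m}(S)$ and $H^1_{\mathfrak m}(S)$ vanish; the four-term sequence then yields a graded isomorphism $S\cong\bigoplus_m H^0(X,\O_X(m))$, i.e. $h^0(X,\O_X(m))=\dim_\CC (R/I)_m$ for every $m$, so that $P_X(t)$ is the Hilbert series of $R/I$.

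It then remains to compute this Hilbert series, which is routine: the polynomial ring has $H_R(t)=\prod_{i=0}^N(1-t^{a_i})^{-1}$, and since the $f_j$ form a regular sequence, each successive quotient fits into a graded short exact sequence
$$
0\to \big(R/(f_1,\ldots,f_{j-1})\big)(-d_j)\xrightarrow{\ f_j\ } R/(f_1,\ldots,f_{j-1})\to R/(f_1,\ldots,f_j)\to 0,
$$
which multiplies the Hilbert series by $(1-t^{d_j})$; induction on $j$ gives the claimed formula. Note that part~(i) genuinely needs $\dim X\ge 1$: for a finite set of points the term $H^1_{\mathfrak m}(S)$ need not vanish, and the naive Poincar\'e series then differs from the Hilbert series of $R/I$ by its contribution.

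The one point that requires real care—and which is the crux of the argument—is the passage between the sheaf cohomology of $\O_X(m)$ on $X$ and the local cohomology of the graded ring $S$ at the vertex, that is, the vanishing $H^j_{\mathfrak m}(S)=0$ away from $j=\dim X+1$; everything else is formal. As an alternative route to part~(ii) that avoids local cohomology, one can instead resolve $\O_X$ on $\P$ by the Koszul resolution associated to the regular sequence $f_1,\ldots,f_k$ and chase the induced short exact sequences of syzygy sheaves against the vanishing $H^i(\P,\O_\P(m))=0$ for $0<i<N$; this produces a chain of isomorphisms terminating in $H^i(X,\O_X(m))\cong H^{i+k}(\P,\O_\P(m-\sum d_j))$, which vanishes because $0<i+k<N$ in the stated range.
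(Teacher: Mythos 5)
Your proof is correct and follows essentially the same route as the paper's: pass to the affine cone, use Cohen--Macaulayness of the complete intersection ring to kill local cohomology at the vertex, apply the splitting lemma for $p_*\O_{C^*_X}$, and compute the Hilbert series of $R/I$ by induction along the regular sequence. Your explicit justification of the Leray collapse via affineness of $p$, the caveat that part~(i) needs $\dim X\ge 1$, and the alternative Koszul-resolution argument for part~(ii) are nice refinements, but the core argument coincides with the paper's own proof.
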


\begin{proof}
Denote
the graded polynomial ring $\CC[x_0,\ldots, x_N]$ by $R$,
denote the weighted homogeneous ideal~\mbox{$(f_1,\ldots,f_k)$} that defines $X$ by $I$, and put $S=R/I$,
so that~\mbox{$X\cong\Proj S$}.
From regularity of the sequence $f_1,\ldots,f_k$ it easily follows that
\begin{equation}\label{eq:S-Poincare}
\sum_{m\ge 0} \dim(S_m)t^m=\frac{\prod_{j=1}^k (1-t^{d_j})}{\prod_{i=0}^N (1-t^{a_i})},
\end{equation}
where $S_m$ is the $m$-th graded component of $S$.

Denote the affine cone
$$
\Spec\, S\subset \Spec\, R\cong \Aff^{N+1}
$$
over $X$ by $C_X$. By construction, one has an isomorphism
of graded algebras
\begin{equation}\label{eq:S-vs-CX}
S\cong H^0\left(C_X,\O_{C_X}\right).
\end{equation}

Following Lemma~\ref{fact: splitting} denote the complement of $C_X$ to its vertex $P$ by $C^*_X$.
Consider the local cohomology groups
$H^\bullet_P(C_X,\O_{C_X})$, see, for instance,~\cite[p.~2, Definition]{Ha67}.
We have the exact sequence
$$
\ldots\to H^i_P\left(C_X,\O_{C_X}\right)\to H^i\left(C_X,\O_{C_X}\right)\to H^i\left(C^*_X,\O_{C^*_X}\right)\to H^{i+1}_P\left(C_X,\O_{C_X}\right)\to \ldots,
$$
see~\cite[Corollary 1.9]{Ha67}.
Since $C_X$ is a complete intersection in the affine space, it is Cohen--Macaulay.
Therefore, by \cite[Proposition~3.7]{Ha67} and \cite[Theorem 3.8]{Ha67} one has
$$
H^i_P\left(C_X,\O_{C_X}\right)=0
$$
for all $i<\dim C_X=\dim X+1$.
Hence, we obtain
an isomorphism
\begin{equation}\label{eq:CX-vs-CXstar}
H^i\left(C_X,\O_{C_X}\right)\cong H^i\left(C^*_X,\O_{C^*_X}\right)
\end{equation}
for all $i<\dim X$.

Finally, denote the natural projection
$C^*_X\to X$ by $p$. Then
\begin{equation}
\label{eq:CXstar-vs-pCXstar}
H^i(C^*_X,\O_{C^*_X})\cong H^i(X, p_*\O_{C^*_X})
\end{equation}
for all $i$.
On the other hand,
by Lemma~\ref{fact: splitting} we have
\begin{equation}
\label{eq:pCXstar}
p_*(\O_{C^*_X})=\bigoplus\limits_{m\in\ZZ} \O_X(m).
\end{equation}

For $i=0$, we combine the isomorphisms~\eqref{eq:S-vs-CX}, \eqref{eq:CX-vs-CXstar},
\eqref{eq:CXstar-vs-pCXstar}, and~\eqref{eq:pCXstar} to obtain an
isomorphism
of graded algebras
\begin{equation*}
S\cong \bigoplus\limits_{m\ge 0} H^0\left(X,\O_X(m)\right).
\end{equation*}
This together with~\eqref{eq:S-Poincare} gives assertion~(i).

For $0<i<\dim X$, we combine the isomorphisms~\eqref{eq:CX-vs-CXstar},
\eqref{eq:CXstar-vs-pCXstar}, and~\eqref{eq:pCXstar} to obtain an
isomorphism
\begin{equation*}
H^i\left(C_X,\O_{C_X}\right)\cong \bigoplus\limits_{m\in\ZZ} H^i\left(X,\O_X(m)\right).
\end{equation*}
Since $C_X$ is an affine variety, we have $H^i(C_X,\O_{C_X})=0$ for all $i>0$,
see for instance~\mbox{\cite[Theorem~III.3.5]{Ha77}}.
This proves assertion~(ii).
\end{proof}

Proposition~\ref{proposition:Poincare}(i) implies
the following property
that can be considered as an analog of linear normality for usual complete intersections.

\begin{corollary}
\label{corolary:linear-normality}
Let $X\subset\P$ be a quasi-smooth well formed weighted complete intersection.
Then the restriction map
$$
H^0\big(\P,\O_{\P}(m)\big)\to H^0\big(X, \O_X(m)\big)
$$
is surjective for every $m\in\ZZ$.
\end{corollary}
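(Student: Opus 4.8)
The plan is to reduce the statement to the elementary observation that a graded ring surjects onto a graded quotient in every degree. Write $R=\CC[x_0,\ldots,x_N]$ for the graded coordinate ring of $\P$, let $I=(f_1,\ldots,f_k)$ be the weighted homogeneous ideal defining $X$, and put $S=R/I$. Applying the argument used for assertion~(i) of Proposition~\ref{proposition:Poincare} to $X$ yields an isomorphism of graded algebras $S\cong\bigoplus_{m\ge 0}H^0(X,\O_X(m))$; applying the very same argument to the codimension-zero weighted complete intersection $\P\subset\P$ (with empty multidegree, so that $I=0$ and $S=R$) yields $R\cong\bigoplus_{m\ge 0}H^0(\P,\O_\P(m))$. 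Under these identifications one has $R_m\cong H^0(\P,\O_\P(m))$ and $S_m\cong H^0(X,\O_X(m))$ for every $m\ge 0$.

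Next I would check that, under the two identifications above, the restriction map in the statement is nothing but the canonical projection $q_m\colon R_m\to S_m=R_m/I_m$. This is a functoriality statement already built into the construction of Lemma~\ref{fact: splitting}: the closed embedding of punctured affine cones $C^*_X\hookrightarrow U=\A^{N+1}\setminus\{0\}$ lies over the inclusion $X\hookrightarrow\P$ and is equivariant for the grading, so the corresponding map of structure sheaves $\O_U\to\O_{C^*_X}$ is graded; pushing it forward to $\P$ and decomposing into graded pieces via the splitting $p_*\O_{C^*_X}=\bigoplus_m\O_X(m)$ identifies its degree-$m$ part with the sheaf restriction $\O_\P(m)\to\O_X(m)$. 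Taking $H^0(\P,-)$ and inserting the isomorphisms of the previous paragraph turns this into the projection $R_m\to S_m$. Concretely, a section of $\O_\P(m)$ is represented by a degree-$m$ polynomial, and its image in $H^0(X,\O_X(m))$ is precisely its restriction to $X$, that is, its residue class modulo $I$.

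With this identification the conclusion is immediate: the projection $R_m\to S_m$ is surjective for every $m\ge 0$ because $S=R/I$ and $I$ is homogeneous, whence the restriction map is surjective in every nonnegative degree. For $m<0$ both groups vanish, since $R$ and $S$ are nonnegatively graded; equivalently, Lemma~\ref{fact: splitting} gives $\bigoplus_{m\in\ZZ}H^0(\P,\O_\P(m))=H^0(U,\O_U)=R$, which has no negative-degree part, and likewise for $X$. Hence the restriction map is surjective for all $m\in\ZZ$. The only point requiring genuine care is the compatibility of the restriction map with the quotient map $q_m$; everything else is formal once Proposition~\ref{proposition:Poincare} is in hand, so I expect that naturality check to be the main (and essentially the sole) obstacle.
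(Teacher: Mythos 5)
Your proof is correct, and it rests on the same two ingredients as the paper's proof --- Proposition~\ref{proposition:Poincare}(i) and the cone construction of Lemma~\ref{fact: splitting} --- but the closing mechanism is genuinely different. The paper's entire argument is a dimension count: the image of the restriction map is identified with $S_m=R_m/I_m$ (using that $I$ is the full weighted homogeneous ideal of $X$, so a degree-$m$ polynomial restricts to zero on $X$ exactly when it lies in $I_m$), hence its dimension is the coefficient of $t^m$ in~\eqref{eq:S-Poincare}; by Proposition~\ref{proposition:Poincare}(i) that coefficient also equals $h^0\big(X,\O_X(m)\big)$, and equality of the finite dimensions of the image and of the target forces surjectivity. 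You instead identify the map itself: unwinding the chain of isomorphisms in the proof of Proposition~\ref{proposition:Poincare}(i) and checking that each is natural with respect to the closed embedding $C^*_X\hookrightarrow U$ of punctured cones, you exhibit the restriction map as the graded quotient $R_m\to S_m$, which is surjective for free. That naturality check is exactly where the real work sits, and your sketch of it (equivariance of $C^*_X\hookrightarrow U$ for the grading, push-forward along the affine projections, and the definitional identity $\O_{\P}(m)|_X=\O_X(m)$) is sound. As for what each route buys: the paper's count is two lines, but it silently relies on finite-dimensionality of the graded pieces and on the kernel identification $\ker = I_m$; your argument needs neither, pins down the map rather than merely its surjectivity, and recovers the paper's implicit kernel statement as a byproduct --- at the cost of the diagram chase you correctly flagged as the main obstacle.
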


\begin{proof}
The dimension of the image of the restriction map is computed by the coefficient
in the Poincar\'e series~\eqref{eq:S-Poincare}.
On the other hand, by
Proposition~\ref{proposition:Poincare}(i)
this coefficient also equals the dimension of~\mbox{$ H^0(X, \O_X(m))$}.
\end{proof}

To proceed, we will need an elementary observation.

\begin{lemma}[{see~\cite[Lemma~18.3]{IF00}}]\label{lemma:ratio}
Let $N$ and $N'$ be positive integers, and $k$ and~$k'$ be non-negative integers.
Let $a_0\le\ldots\le a_N$, $a_0'\le\ldots\le a_{N'}'$,
$d_1\le\ldots\le d_k$, and~\mbox{$d_1'\le\ldots\le d_{k'}'$} be positive integers.
Suppose that
\begin{equation}\label{eq:ratio}
\frac{\prod_{j=1}^k (1-t^{d_j})}{\prod_{i=0}^N (1-t^{a_i})}=
\frac{\prod_{j'=1}^{k'} (1-t^{d'_{j'}})}{\prod_{i'=0}^{N'} (1-t^{a'_{i'}})}
\end{equation}
as rational functions in the variable $t$.
Suppose that $a_i\neq d_j$ for all $i$ and $j$, and
$a_{i'}'\neq d_{j'}'$ for all $i'$ and $j'$.
Then $N=N'$, $k=k'$, $a_i=a'_i$ for every $0\le i\le N$, and $d_j=d'_j$ for every~\mbox{$1\le j\le k$}.
\end{lemma}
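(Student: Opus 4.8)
The plan is to reduce the statement to a uniqueness property for polynomials of the form $\prod_\ell(1-t^{m_\ell})$, and then to peel the numerators apart from the denominators using the hypothesis that no weight coincides with a degree. First I would clear denominators in~\eqref{eq:ratio}. Since both sides are products of factors $1-t^m$ with $m$ a positive integer, the equality of rational functions is equivalent to the polynomial identity
\[
\prod_{j=1}^k (1-t^{d_j})\cdot\prod_{i'=0}^{N'}(1-t^{a'_{i'}})
=\prod_{j'=1}^{k'}(1-t^{d'_{j'}})\cdot\prod_{i=0}^{N}(1-t^{a_i}).
\]
Each side has the shape $\prod_m(1-t^m)$, where on the left the exponents range over the multiset obtained by merging $\{d_1,\dots,d_k\}$ with $\{a'_0,\dots,a'_{N'}\}$, and on the right over the multiset merging $\{d'_1,\dots,d'_{k'}\}$ with $\{a_0,\dots,a_N\}$ (with multiplicities added).

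The crux of the argument is the claim that the assignment sending a finite multiset of positive integers $\{m_\ell\}$ to the polynomial $P(t)=\prod_\ell(1-t^{m_\ell})$ is injective. To prove this I would pass to the logarithmic derivative: expanding $\frac{1}{1-t^{m_\ell}}=\sum_{n\ge0}t^{m_\ell n}$, one checks that the coefficient of $t^{M}$ in $-t\,P'(t)/P(t)$ equals $g(M):=\sum_{\ell:\,m_\ell\mid M} m_\ell=\sum_{m\mid M} m\,c(m)$, where $c(m)$ is the multiplicity of $m$ in the multiset. Thus $P$ determines all the numbers $g(M)$, and hence, by strong induction on $M$ (the divisor $m=M$ contributes $M\,c(M)$, while every other term involves some $c(m)$ with $m\mid M$ and $m<M$), the polynomial $P$ determines every multiplicity $c(M)$. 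This is the step I expect to be the main obstacle, although it is elementary once one hits on the logarithmic derivative; the naive temptation to compare exponents factor by factor fails, since the cyclotomic factors of $1-t^m$ overlap for different values of $m$.

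Applying the injectivity to the displayed polynomial identity yields the multiset equality
\[
\{d_1,\dots,d_k\}\sqcup\{a'_0,\dots,a'_{N'}\}=\{d'_1,\dots,d'_{k'}\}\sqcup\{a_0,\dots,a_N\}.
\]
It remains to separate the four multisets using the hypotheses $a_i\neq d_j$ and $a'_{i'}\neq d'_{j'}$. Fix a value $v$ and let $\alpha,\delta,\alpha',\delta'$ denote its multiplicities in $\{a_i\}$, $\{d_j\}$, $\{a'_{i'}\}$, $\{d'_{j'}\}$ respectively. The two ``no linear cone'' hypotheses give $\alpha\delta=0$ and $\alpha'\delta'=0$, while the multiset equality gives $\delta+\alpha'=\delta'+\alpha$.

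A short case analysis then forces $\alpha=\alpha'$ and $\delta=\delta'$ in all cases: if $\delta>0$ then $\alpha=0$, so $\delta'=\delta+\alpha'\ge\delta>0$, whence $\alpha'=0$ and $\delta'=\delta$; the case $\alpha>0$ is symmetric; and if $\alpha=\delta=0$ then $\alpha'=\delta'$ together with $\alpha'\delta'=0$ forces both to vanish. Consequently $\{a_i\}=\{a'_{i'}\}$ and $\{d_j\}=\{d'_{j'}\}$ as multisets. Comparing cardinalities gives $N=N'$ and $k=k'$, and since all four sequences are non-decreasing the desired equalities $a_i=a'_i$ and $d_j=d'_j$ follow term by term.
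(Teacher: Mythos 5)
Your proof is correct, but it follows a genuinely different route from the paper's. The paper argues by induction on the largest exponent: after using the two symmetries of~\eqref{eq:ratio} (swapping each numerator with its denominator, and swapping the two sides) to assume $d_k$ is maximal among all four collections, it evaluates at a primitive $d_k$-th root of unity $\zeta$; the hypothesis $a_i\neq d_j$ gives $a_N<d_k$, so $\zeta$ is a zero of the left-hand side, hence of the right-hand numerator, which by maximality forces $d'_{k'}=d_k$; one cancels $1-t^{d_k}$ and inducts. Thus the paper re-uses the ``no linear cone'' hypothesis at every inductive step to keep the maximal numerator factor from being absorbed by the denominator. You instead clear denominators first, prove the clean standalone fact that a finite multiset $\{m_\ell\}$ is determined by the polynomial $\prod_\ell(1-t^{m_\ell})$ (via the logarithmic derivative identity whose $t^M$-coefficient is $\sum_{m\mid M}m\,c(m)$, recovered by strong induction on $M$; the paper's implicit analogue of this would be counting multiplicities at primitive roots of unity), and only then invoke the hypothesis $a_i\neq d_j$, $a'_{i'}\neq d'_{j'}$ in a purely combinatorial multiset-splitting step, which you carry out correctly. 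Your structure buys a reusable injectivity lemma and confines the role of the no-cone assumption to a single transparent counting argument; the paper's inductive peeling is shorter and avoids introducing the auxiliary generating-function machinery. Both proofs are complete and rigorous.
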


\begin{proof}
Note that numerators and denominators of the rational functions in the left and the right hand sides of~\eqref{eq:ratio}
may have common divisors (for instance, if some $d_j$ is divisible by some $a_i$, or another way around). To prove the
assertion, we will keep track of the numbers that are roots of either the numerator or the denominator, but not both of them.

Observe that equality~\eqref{eq:ratio} is equivalent to the equality  obtained
from~\eqref{eq:ratio} by interchanging the collections $\{a_i\}$, $\{a_j'\}$ with $\{d_s\}$, $\{d_r'\}$, respectively.
Thus we may assume that~$d_k$ is the maximal number among $a_N$, $a_{N'}'$, $d_k$, and $d_{k'}'$.
By assumption we know that~\mbox{$a_N<d_k$}. Let $\zeta$ be a primitive $d_k$-th root of unity.
Then $\zeta$ is a root of the numerator of the left hand side of~\eqref{eq:ratio}
but not the root of its denominator. Hence $\zeta$ is a root of the numerator~$\nu(t)$ of
the right hand side of~\eqref{eq:ratio} as well.
Since~\mbox{$d_{j'}'\le d_k$} for all $j'$, we see that~$\nu(t)$
is divisible by $1-t^{d_k}$. Cancelling the factor~\mbox{$1-t^{d_k}$} from~\eqref{eq:ratio}, we complete the proof
of the lemma by induction.
\end{proof}

Now we prove the main result of this section.

\begin{proof}[Proof of Proposition~\ref{proposition:unique-embedding}]
By Lemma~\ref{lemma:Okada},
the group $\Cl(X)$ is generated by the class of the line bundle
$\O_X(1)$, while the group $\Cl(X')$ is generated by the class of the line
bundle~$\O_{X'}(1)$.
Therefore, we see from Proposition~\ref{proposition:Poincare}(i) that
$$
\frac{\prod_{j=1}^k (1-t^{d_j})}{\prod_{i=0}^N (1-t^{a_i})}=P_X(t)=P_{X'}(t)=
\frac{\prod_{j'=1}^{k'} (1-t^{d'_{j'}})}{\prod_{i'=0}^{N'} (1-t^{a'_{i'}})}.
$$
Since neither $X$ nor $X'$ is an intersection with a linear cone,
the required assertion follows from Lemma~\ref{lemma:ratio}.
\end{proof}

\begin{remark}
The assertion of Proposition~\ref{proposition:unique-embedding} also
holds for smooth Fano weighted complete intersections of dimension~$2$.
This follows from their explicit classification, see Lemma~\ref{lemma:small-dimension}.
Note however that the assertion fails in dimension $1$. Indeed,
a conic in $\P^2$ is isomorphic to~$\P^1$ (which can be considered
as a complete intersection of codimension~$0$ in itself).
\end{remark}

\begin{remark}
We point out that the
assumption that $X$ is Fano is essential for the validity of
Proposition~\ref{proposition:unique-embedding} in dimension $2$. For instance,
there exist smooth quartics in~$\P^3$ that also have a structure of a double cover
of $\P^2$ branched in a sextic curve, see e.g.~\mbox{\cite[Proof of Theorem~4]{MM63}}.
Similarly, by Remark~\ref{remark:families} the assertion of Proposition~\ref{proposition:unique-embedding}
fails for elliptic curves.
\end{remark}

We do not know if the assertion of Proposition~\ref{proposition:unique-embedding} holds
in dimension $2$ in the case when~$X$ and $X'$ are quasi-smooth del Pezzo surfaces.
We point out that the
assumption that~$X$ alone is quasi-smooth is not enough for this.
Indeed, the weighted projective plane~\mbox{$\P(1,1,2)$}
(which can be considered as a quasi-smooth well formed weighted complete
intersection of codimension~$0$ in itself)
can be embedded as a quadratic cone into~$\P^3$ (which is not quasi-smooth).

In the proof of Theorem~\ref{theorem:automorphisms}, we will need a classification of
weighted complete intersections of large Fano index.

\begin{proposition}[{cf.~\cite[Theorem~2.7]{PSh18}}]
\label{proposition:low-coindex}
Let $X\subset \PP$ be a smooth well formed Fano weighted complete intersection of  dimension $n\ge 2$. Then
\begin{itemize}
\item[(i)] one has $i_X\le n+1$;

\item[(ii)] if $i_X=n+1$, then $X\cong\P^n$;

\item[(iii)] if $i_X=n$, then $X$ is isomorphic to a quadric in~$\P^{n+1}$;

\item[(iv)] if $i_X=n-1$ and $n\ge 3$, then $X$ is isomorphic to a hypersurface
of degree $6$ in $\P=\P(1^n, 2,3)$, or to
a hypersurface of degree $4$ in $\P=\P(1^{n+1}, 2)$, or to
a cubic hypersurface in $\P=\P^{n+1}$, or to an intersection
of two quadrics in $\P=\P^{n+2}$.
\end{itemize}
\end{proposition}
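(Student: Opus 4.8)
The plan is to deduce (i)--(iii) from general Fano theory and to reduce the borderline case (iv) to low dimensions. The basic input is Corollary~\ref{corollary:index}, by which $i_X$ equals the Fano index of $X$; in particular $i_X$ is an invariant of the abstract variety. Rewriting the coindex in the form
\[
n+1-i_X=\sum_{j=1}^{k}(d_j-1)-\sum_{i=0}^{N}(a_i-1),
\]
assertion (i) says this quantity is nonnegative, while (ii) and (iii) single out its two smallest values. I would obtain all three at once from the Kobayashi--Ochiai theorem, which asserts that a smooth Fano $n$-fold has Fano index at most $n+1$, attaining $n+1$ only for $\P^n$ and $n$ only for a smooth quadric; combined with Corollary~\ref{corollary:index} this is precisely (i)--(iii).

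For (iv) I assume $i_X=n-1$ with $n\ge 3$. Since (iv) concerns $X$ only up to isomorphism and $i_X$ is intrinsic, I may apply the standard linear cone reduction (eliminate $x_i$ whenever $d_j=a_i$) and assume that $X\subset\P(a_0,\ldots,a_N)$ is not an intersection with a linear cone, so that $d_j\neq a_i$ for all $i,j$ and no $d_j$ equals $1$; the coindex identity then reads $\sum_j(d_j-1)-\sum_i(a_i-1)=2$. The four families in the statement are exactly the weight patterns with no weight exceeding $1$ (giving multidegree $(3)$ or $(2,2)$ in ordinary projective space), with a single weight $2$ (giving $(4)$ in $\P(1^{n+1},2)$), and with a single weight $2$ together with a single weight $3$ (giving $(6)$ in $\P(1^{n},2,3)$); here $\sum_i(a_i-1)$ equals $0,1,3$, and in each case the coindex identity together with $d_j\ge 2$ and $d_j\neq a_i$ forces the listed multidegree. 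The whole content of (iv) is thus that no other weight pattern is admissible, and I would prove this by induction on $n$. The base case $n=3$ is the known classification of smooth Fano threefold weighted complete intersections of Fano index $2$---the del Pezzo threefolds of degree at most $4$ presented as weighted complete intersections---see \cite{PSh18}, and it yields exactly the four families.

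For the inductive step, with $n\ge 4$, I would first use smoothness (crucially, not merely quasi-smoothness) to show that the smallest weight is $a_0=1$: the weights exceeding $1$ determine the singular locus $\Sing\P$, and a smooth well formed $X$ must be smooth at every point of $X\cap\Sing\P$, which rules out weight patterns that are too large. Given a weight-$1$ variable, the converse of Lemma~\ref{lemma:extension} (see \cite[Theorem~1.2]{PST}) presents $X$ as an extension of a smooth well formed weighted complete intersection $X'$ of dimension $n-1$ and the same multidegree, with $i_{X'}=i_X-1$, so that $X'$ again has coindex $2$ and, by the inductive hypothesis, lies in one of the four families; since adjoining a weight-$1$ variable leaves both the weight pattern and the multidegree within the same family, so does $X$. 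The main obstacle is precisely this step: one must verify in detail that the hypotheses of the converse extension result hold---in particular that $a_0=1$ and that the weight-$1$ coordinate really exhibits $X$ as such an extension---and carry the three-dimensional base case cleanly. Finally, Proposition~\ref{proposition:unique-embedding} shows that the four families are pairwise non-isomorphic, so the list is irredundant.
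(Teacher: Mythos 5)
Your treatment of parts (i)--(iii) is exactly the paper's: combine Corollary~\ref{corollary:index} with the Kobayashi--Ochiai-type bound as stated in \cite[Corollary~3.1.15]{IP99}. For part (iv) you take a genuinely different route --- induction on dimension via the converse extension theorem \cite[Theorem~1.2]{PST} and the three-dimensional classification --- which is in fact the alternative that the paper itself sketches in the remark following Corollary~\ref{corollary:low-coindex}. The paper's actual proof of (iv) is intrinsic: since $\Pic(X)\cong\Z$ by Lemma~\ref{lemma:Okada}, Fujita's classification of Fano $n$-folds of index $n-1$ (\cite{Fu80-84}, \cite[Theorem~3.2.5]{IP99}) shows that $X$ is either one of the four listed families or a linear section of $\mathrm{Gr}(2,5)$; the latter is then excluded in dimensions $3$ and $4$ using the known classifications of smooth Fano weighted complete intersections, and in dimensions $5$ and $6$ by comparing $H^4(X,\Z)\cong\Z^2$, forced by the Lefschetz theorem on $\mathrm{Gr}(2,5)$, with $H^4(X,\Z)\cong\Z$ for weighted complete intersections of dimension greater than $4$ \cite[Proposition~1.4]{Ma99}. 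Because Fujita's theorem classifies the abstract variety, the paper never has to worry about which weighted presentation $X$ comes with.

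The genuine gap in your argument is its very first reduction in (iv): the claim that, since (iv) is a statement up to isomorphism, you may ``eliminate $x_i$ whenever $d_j=a_i$'' and assume $X$ is not an intersection with a linear cone. Eliminating $x_i$ is possible only when the defining polynomial of degree $d_j$ contains the monomial $x_i$ with a nonzero coefficient; when it does not, the variety need not be isomorphic to \emph{any} weighted complete intersection that is not an intersection with a linear cone. This is precisely the warning of Remark~\ref{remark:cone}: certain smooth well formed weighted complete intersections of bidegree $(2,4)$ in $\P(1^{n+2},2)$ are double covers of quadrics and are not obtained by any such elimination. Since Proposition~\ref{proposition:low-coindex}, unlike Corollary~\ref{corollary:low-coindex}, imposes no non-cone hypothesis, your induction at best proves the Corollary, and the passage from the Corollary to the Proposition --- i.e.\ handling smooth coindex-$2$ weighted complete intersections that \emph{are} intersections with linear cones --- is exactly the step that is missing. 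Two further points need attention even in the non-cone case: first, your inductive hypothesis classifies $X'$ only up to isomorphism, so to conclude that the weights and degrees of $X'$ (hence of $X$, after re-adjoining the weight-$1$ variable) are those of one of the four families you must invoke Proposition~\ref{proposition:unique-embedding} at this point in the induction (you use it only for irredundancy of the list, where it is not needed); second, the assertions that $a_0=1$ and that the hypotheses of \cite[Theorem~1.2]{PST} hold for $X$ are stated rather than proved, and that is where the real work of your approach lies.
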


\begin{proof}
Recall that $i_X$ equals the Fano
index of $X$ by Corollary~\ref{corollary:index}.
By~\mbox{\cite[Corollary 3.1.15]{IP99}}, we know that $i_X\le n+1$; if $i_X=n+1$,
then $X$ is isomorphic to
$\P^n$; and if $i_X=n$, then $X$ is isomorphic to a quadric in $\P^{n+1}$. This proves
assertions~(i), (ii), and~(iii).

Now suppose that $i_X=n-1$ and $n\ge 3$.
Note that $\Pic(X)\cong\mathbb{Z}$ by Lemma~\ref{lemma:Okada}.
Thus it follows from the classification of smooth Fano varieties of Fano index $n-1$
(see~\cite{Fu80-84} or~\mbox{\cite[Theorem~3.2.5]{IP99}})
that $X$ is isomorphic either to one of the weighted complete intersections
listed in assertion~(iv), or to a linear section
of the Grassmannian~\mbox{$\mathrm{Gr}(2,5)$} in its Pl\"ucker embedding.

It remains to show that a weighted complete intersection $X$ cannot be isomorphic to
a linear section of the Grassmannian $\mathrm{Gr}(2,5)$. Suppose that $X$ is isomorphic
to such a variety. Then $n\le 6$.
If~\mbox{$n=3$}, then the Fano index of $X$ equals $2$ and its anticanonical
degree is equal to~$40$.
If~\mbox{$n=4$}, then the Fano index of $X$ equals $3$ and its anticanonical
degree is equal to~$405$.
In both cases we see from Remark~\ref{remark:cone} that there exists a
smooth weighted complete intersection $X$ with the same $n$ and $i_X$ that
is not an intersection with a linear cone.
This is impossible by a classification of
smooth Fano weighted complete intersections of dimensions~$3$ and~$4$,
see~\cite[Table 2]{PSh18} (cf.~\cite[\S12]{IP99})
and~\cite[Table~1]{PrzyalkowskiShramov-Weighted}, respectively.

Therefore, we see that $5\le n\le 6$. Recall that
$$
H^4\big(\mathrm{Gr}(2,5),\mathbb{Z}\big)\cong\mathbb{Z}^2.
$$
Thus, Lefschetz hyperplane section theorem implies that
$$
H^4(X,\mathbb{Z})\cong\mathbb{Z}^2.
$$
On the other hand, since $X$ is a
weighted complete intersection of dimension greater than~$4$,
one has $H^4(X,\mathbb{Z})\cong\mathbb{Z}$ by the Lefschetz-type theorem for complete intersections
in toric varieties, see~\cite[Proposition~1.4]{Ma99}.
The obtained contradiction completes the proof of assertion~(iv).
\end{proof}

Proposition~\ref{proposition:unique-embedding} allows us to prove more precise
classification results
concerning Fano weighted complete intersections (which we will not use directly
in our further proofs).

\begin{corollary}
\label{corollary:low-coindex}
Let $X\subset \PP$ be a smooth well formed Fano weighted complete intersection of  dimension $n\ge 2$
that is not an intersection with a linear cone. Then
\begin{itemize}
\item[(i)] if $i_X=n+1$, then $X=\P=\P^n$;

\item[(ii)] if $i_X=n$, then $X$ is a quadric in $\P=\P^{n+1}$;

\item[(iii)] if $i_X=n-1$, then $X$ is either a hypersurface
of degree $6$ in $\P=\P(1^n, 2,3)$, or
a hypersurface of degree $4$ in $\P=\P(1^{n+1}, 2)$, or
a cubic hypersurface in $\P=\P^{n+1}$, or an intersection
of two quadrics in $\P=\P^{n+2}$.
\end{itemize}
\end{corollary}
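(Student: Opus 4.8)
The plan is to combine two ingredients that are already available: Proposition~\ref{proposition:low-coindex}, which pins down the \emph{isomorphism class} of $X$, and Proposition~\ref{proposition:unique-embedding}, which shows that the presentation of a variety of dimension at least $3$ as a weighted complete intersection (not an intersection with a linear cone) is unique. Indeed, the only gap between Proposition~\ref{proposition:low-coindex} and the corollary is that the former identifies $X$ up to abstract isomorphism with a standard model, whereas the latter asserts that the ambient space $\P$ and the multidegree of $X$ \emph{coincide} with those of the model; this upgrade is exactly what uniqueness of embedding provides.

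First I would settle the case $n=2$ by hand, since Proposition~\ref{proposition:unique-embedding} is unavailable there. By Corollary~\ref{corollary:index} the Fano index of $X$ equals $i_X$, and Lemma~\ref{lemma:small-dimension} lists all smooth well formed Fano weighted complete intersections of dimension $2$ that are not intersections with a linear cone. Sorting the entries of Table~\ref{table:dim 12} by the value of $i_X$, one finds that entry $2.6$ (with $i_X=3$) is $\P^2$, entry $2.5$ (with $i_X=2$) is a quadric in $\P^3$, and entries $2.1$--$2.4$ (all with $i_X=1$) are precisely the four models of assertion~(iii) specialized to $n=2$. Thus for $n=2$ the statement is a direct reading of the table.

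For $n\ge 3$ I would first apply Proposition~\ref{proposition:low-coindex}(ii)--(iv) to obtain a concrete smooth well formed weighted complete intersection $X'$, namely $\P^n$, a quadric in $\P^{n+1}$, or one of the four index-$(n-1)$ models, together with an abstract isomorphism $X\cong X'$. Each such $X'$ is not an intersection with a linear cone: for $\P^n$ the condition is vacuous, for the quadric one has $2\ne 1$, and for the models of~(iv) one checks $6\notin\{1,2,3\}$, $4\notin\{1,2\}$, $3\ne 1$, and $2\ne 1$. Since $X'$ is smooth and well formed it is quasi-smooth, and after ordering weights and degrees non-decreasingly both $X$ and $X'$ are normalized. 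Hence Proposition~\ref{proposition:unique-embedding} applies and forces the ambient weighted projective spaces and the multidegrees of $X$ and $X'$ to agree. This says exactly that $X$ \emph{is} the named hypersurface or complete intersection inside the named $\P$, which is the assertion of~(i), (ii), and~(iii).

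The main obstacle is bookkeeping rather than conceptual: I must make sure the comparison model $X'$ supplied by Proposition~\ref{proposition:low-coindex} genuinely satisfies all the hypotheses of Proposition~\ref{proposition:unique-embedding} (quasi-smooth, well formed, normalized, and not an intersection with a linear cone), and I must keep in mind that this uniqueness tool requires $\dim X\ge 3$, which is precisely why the case $n=2$ is handled separately through the explicit classification. Once uniqueness is in force, the mutual non-isomorphism of the four index-$(n-1)$ models needs no separate verification, since $X$ can match at most one normalized model.
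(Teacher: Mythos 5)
Your proposal is correct and follows essentially the same route as the paper: assertions (i)--(iii) for $n\ge 3$ via Proposition~\ref{proposition:low-coindex} combined with Proposition~\ref{proposition:unique-embedding}, and the case $n=2$ via the classification in Lemma~\ref{lemma:small-dimension}. If anything, you are slightly more careful than the paper's one-line proof, since you explicitly route the $n=2$ instances of (i) and (ii) through the table (where Proposition~\ref{proposition:unique-embedding} does not formally apply) and verify that the comparison models satisfy the hypotheses of the uniqueness statement.
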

\begin{proof}
Assertions (i) and (ii) follow from
assertions (ii) and (iii) of Proposition~\ref{proposition:low-coindex}, respectively,
applied together with Proposition~\ref{proposition:unique-embedding}.
If $n=2$, assertion (iii) follows from Lemma~\ref{lemma:small-dimension}.
If $n\ge 3$, assertion (iii) follows from
Proposition~\ref{proposition:low-coindex}(iv) and
Proposition~\ref{proposition:unique-embedding}.
\end{proof}

\begin{remark}
An alternative way to prove
Corollary~\ref{corollary:low-coindex} (which in turn can be used
to deduce Proposition~\ref{proposition:low-coindex})
is by induction on dimension using the classification of smooth well formed Fano weighted
complete intersections of low dimension
(say, one provided by Lemma~\ref{lemma:small-dimension}) together
with~\mbox{\cite[Theorem~1.2]{PST}}.
\end{remark}

\section{Automorphisms}
\label{section:automorphisms}

In this section we prove Theorem~\ref{theorem:automorphisms}.

Let $\P=\P(a_0,\ldots,a_N)$ be a weighted projective space.
For any subvariety $X\subset\P$, we denote by
$\Aut(\P;X)$ the stabilizer of $X$ in $\Aut(\P)$. We denote by
$\Aut_{\P}(X)$
the image of~\mbox{$\Aut(\P;X)$} under the restriction map
to $\Aut(X)$. In other words, the group $\Aut_{\P}(X)$
consists of automorphisms of $X$ induced by automorphisms of $\P$.

We start with a general result that is well known to experts
(see for instance~\mbox{\cite[Lemma~3.1.2]{KPS18}})
and that was pointed out to us by A.\,Massarenti.

\begin{lemma}\label{lemma:LAG}
Let $X$ be a normal variety, let $A$ be a very ample Weil divisor
on $X$, and let $[A]$ be the class of $A$ in $\Cl(X)$. Denote by
$\Aut(X; [A])$ the stabilizer of $[A]$ in $\Aut(X)$.
Then $\Aut(X; [A])$ is a linear algebraic group.
\end{lemma}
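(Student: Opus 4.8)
The plan is to realize $\Aut(X;[A])$ as a closed subgroup of a projective linear group by means of the embedding furnished by $A$. Since $A$ is very ample, the associated rank-one reflexive sheaf $\O_X(A)$ has a finite-dimensional space of global sections $V=H^0\big(X,\O_X(A)\big)$, and the linear system $|A|$ defines a closed embedding $\varphi=\varphi_{|A|}\colon X\hookrightarrow\PP(V^*)$ under which the hyperplane sections of $\varphi(X)$ correspond to the sections of $\O_X(A)$. I would fix this embedding throughout, together with the standard identification $\Aut\big(\PP(V^*)\big)\cong\PGL(V)$.

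First I would construct a homomorphism $\rho\colon\Aut(X;[A])\to\PGL(V)$. Given $g\in\Aut(X;[A])$, the hypothesis $g^*[A]=[A]$ in $\Cl(X)$ yields an isomorphism of reflexive sheaves $g^*\O_X(A)\cong\O_X(A)$, and pulling back global sections produces a linear automorphism of $V$. Because $X$ is an integral proper variety one has $H^0(X,\O_X^*)=\CC^*$, so the chosen isomorphism of sheaves, and hence the resulting linear automorphism of $V$, is determined up to a nonzero scalar; thus $g$ determines a well-defined element $\rho(g)\in\PGL(V)$. By construction $\varphi$ is equivariant, i.e.\ $\varphi\circ g=\rho(g)\circ\varphi$, so $\rho(g)$ preserves the closed subvariety $\varphi(X)\subset\PP(V^*)$.

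It then remains to identify the image of $\rho$ with the stabilizer $G\subset\PGL(V)$ of $\varphi(X)$. Injectivity of $\rho$ is formal: if $\rho(g)$ is the identity, then $\varphi\circ g=\varphi$, and since $\varphi$ is injective, $g$ is the identity. For surjectivity onto $G$, any element of $\PGL(V)$ preserving $\varphi(X)$ restricts to an automorphism of $X$ preserving the restriction of the hyperplane class, which is exactly $[A]$, and therefore lies in the image of $\rho$. Once this bijection is established, one invokes the standard fact that the stabilizer of a fixed closed subvariety of projective space is Zariski closed in $\PGL(V)$; hence $G$ is a closed subgroup of the linear algebraic group $\PGL(V)$, and the identification $\Aut(X;[A])\cong G$ endows $\Aut(X;[A])$ with the structure of a linear algebraic group.

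I expect the main obstacle to be the bookkeeping in the second step rather than the geometry of the third: namely, checking that $g^*$ induces a genuinely well-defined class in $\PGL(V)$ (resolving the scalar ambiguity via $H^0(X,\O_X^*)=\CC^*$) and verifying the equivariance relation $\varphi\circ g=\rho(g)\circ\varphi$ cleanly for reflexive, rather than invertible, sheaves. The closedness of $G$ and the surjectivity of $\rho$ onto the full geometric stabilizer are then comparatively routine.
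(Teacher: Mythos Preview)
The paper does not actually prove this lemma: it is stated without proof as a result ``well known to experts'', with a pointer to \cite[Lemma~3.1.2]{KPS18}. Your argument---embed $X$ into $\PP(V^*)$ via $|A|$, identify $\Aut(X;[A])$ with the stabilizer of the image inside $\PGL(V)$, and use that such a stabilizer is Zariski closed---is exactly the standard proof one finds in the cited reference, and it is correct as written. The only point worth making explicit is that projectivity (hence properness) of $X$, needed for $H^0(X,\O_X^*)=\CC^*$, is guaranteed by the very ampleness of $A$, since $\varphi_{|A|}$ is a closed embedding into projective space.
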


\begin{corollary}\label{corollary:LAG}
Let $X$ be a quasi-smooth well formed weighted complete intersection
of dimension $n$.
Suppose that either $n\ge 3$, or $K_X\neq 0$.
Then $\Aut(X)$ is a linear algebraic group.
\end{corollary}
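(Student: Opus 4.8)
The plan is to deduce Corollary~\ref{corollary:LAG} from Lemma~\ref{lemma:LAG} by exhibiting a very ample Weil divisor $A$ on $X$ whose class $[A]$ is automatically fixed by \emph{every} automorphism of $X$, so that $\Aut(X;[A])$ coincides with the full group $\Aut(X)$. The natural candidate is a suitable multiple of the hyperplane class $\O_X(1)$. The key observation is that $\Cl(X)$ carries a canonical structure preserved by any automorphism, namely its (partial) ordering by effectivity together with the position of $K_X$, and that under our hypotheses this pins down the ray spanned by $\O_X(1)$.

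First I would treat the case $n\ge 3$. By Lemma~\ref{lemma:Okada}, the class of $\O_X(1)$ generates $\Cl(X)\cong\ZZ$. Any automorphism $g\in\Aut(X)$ induces an automorphism of $\Cl(X)\cong\ZZ$, which can only be $\pm 1$; since $g$ must send effective classes to effective classes and $\O_X(1)$ lies on the effective side, the class $[\O_X(1)]$ is fixed. Hence $\Aut(X;[\O_X(m)])=\Aut(X)$ for every $m$. Choosing $m$ large enough that $A=\O_X(m)$ is a very ample Weil divisor, Lemma~\ref{lemma:LAG} gives that $\Aut(X)=\Aut(X;[A])$ is a linear algebraic group. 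Here I would invoke that on a projective variety some positive multiple of an ample generator of $\Cl(X)$ is very ample, and that $\O_X(1)$ is ample since $X$ is projectively embedded in $\P$.

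For the remaining case $n\le 2$ with $K_X\neq 0$, the group $\Cl(X)$ need not be cyclic, so I would instead use the canonical class directly. By Theorem~\ref{theorem:adjunction} one has $\omega_X=\O_X(-i_X)$, and since $K_X\neq 0$ we have $i_X\neq 0$, so $K_X$ is a nonzero multiple of $[\O_X(1)]$. The class $K_X\in\Cl(X)$ (equivalently the canonical sheaf) is intrinsic and therefore fixed by every $g\in\Aut(X)$. Consequently the ray $\ZZ_{>0}\cdot[\O_X(1)]$, being the effective ray through $\tfrac{1}{i_X}K_X$, is $g$-stable, and a sufficiently divisible very ample multiple $A=\O_X(m)$ again has $[A]$ fixed, so Lemma~\ref{lemma:LAG} applies.

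The main obstacle I anticipate is the last step in the low-dimensional case: one must be careful that fixing the canonical class actually fixes the class $[\O_X(1)]$ itself rather than just its $\Q$-span, since $\Cl(X)$ may have rank greater than one or torsion when $n\le 2$. The cleanest way around this is to note that we do not need $[A]$ literally fixed, only that $g^*[A]$ is again very ample with the same Hilbert polynomial; in fact it suffices to fix $K_X$ and apply Lemma~\ref{lemma:LAG} to $A$ a large multiple of an ample divisor in the one-dimensional $\Q$-space $\Q\cdot K_X=\Q\cdot[\O_X(1)]$, using that $g$ preserves $K_X$ and hence stabilizes $[A]$ up to the finite ambiguity coming from torsion, which does not affect the conclusion that the stabilizer has finite index and is therefore still a linear algebraic group.
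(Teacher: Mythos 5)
Your proof is correct and follows essentially the same route as the paper: exhibit an $\Aut(X)$-invariant ample (hence, after passing to a multiple, very ample) divisor class---the generator of $\Cl(X)\cong\ZZ$ via Lemma~\ref{lemma:Okada} when $n\ge 3$, and a multiple of $K_X=-i_X[\O_X(1)]$ via Theorem~\ref{theorem:adjunction} when $K_X\neq 0$---and then apply Lemma~\ref{lemma:LAG}. The torsion worry in your final paragraph is moot, since choosing $m$ divisible by $i_X$ makes $[\O_X(m)]$ an exact integer multiple of $K_X$ and hence genuinely fixed, so the questionable finite-index patch (which, as stated, is unjustified for abstract groups) is never needed; your second paragraph already contains the complete argument.
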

\begin{proof}
Note that the divisor class $K_X$ is $\Aut(X)$-invariant.
Moreover, if $K_X\neq 0$, then either~$K_X$ or $-K_X$ is ample
by Theorem~\ref{theorem:adjunction}.
On the other hand, if $n\ge 3$, then $\Cl(X)\cong\mathbb{Z}$
by Lemma~\ref{lemma:Okada}, so that an ample generator of $\Cl(X)$
is $\Aut(X)$-invariant. In both cases we see that $\Aut(X)$ preserves
some ample (and thus also some very ample) divisor class on $X$.
Hence $\Aut(X)$ is a linear algebraic group by Lemma~\ref{lemma:LAG}.
\end{proof}

The following lemma will not be used in the proof of
Theorem~\ref{theorem:automorphisms}, but will allow us to prove its (weaker) analog that applies to a slightly wider class of smooth weighted complete
intersections, see Corollary~\ref{corollary:Flenner}(ii) below.

\begin{lemma}\label{lemma:non-ruled}
Let $X$ be a subvariety of $\P$. Then $\Aut_{\P}(X)$ is a linear algebraic group.
\end{lemma}

\begin{proof}
The group $\Aut(\P)$ is obviously a linear algebraic group.
The stabilizer $\Aut(\P;X)$ of $X$ in $\Aut(\P)$
and the kernel $\Aut(\P;X)_{\mathrm{id}}$
of its action on $X$ are cut out in $\Aut(\P)$ by
algebraic equations, so that they are linear algebraic groups.
The group~\mbox{$\Aut(\P;X)_{\mathrm{id}}$} is a normal subgroup of~\mbox{$\Aut(\P;X)$}.
Therefore, the
group
$$
\Aut_{\P}(X)\cong\Aut(\P;X)/\Aut(\P;X)_{\mathrm{id}}
$$
is a linear algebraic group as well, see \cite[Theorem~6.8]{Borel}.
\end{proof}

\begin{corollary}\label{corollary:non-ruled}
Let $X$ be a smooth irreducible subvariety of $\P$. Suppose that $K_X$ is numerically effective.
Then the group~\mbox{$\Aut_{\P}(X)$} is finite.
\end{corollary}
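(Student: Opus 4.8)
The plan is to show that the group $\Aut_{\P}(X)$, which is already known to be a linear algebraic group by Lemma~\ref{lemma:non-ruled}, is zero-dimensional; over the algebraically closed field $\CC$ of characteristic zero a linear algebraic group is finite precisely when its dimension is $0$, so this suffices. First I would argue by contradiction. Assume $\dim\Aut_{\P}(X)>0$ and pass to the identity component $G=\Aut_{\P}(X)^{\circ}$, a connected positive-dimensional linear algebraic group. By the structure theory of such groups, $G$ contains a one-dimensional subgroup $H$ isomorphic either to $\mathbb{G}_a$ or to $\mathbb{G}_m$: if the unipotent radical of $G$ is nontrivial it contains a copy of $\mathbb{G}_a$, and otherwise $G$ is reductive of positive dimension and hence contains a nontrivial torus, thus a copy of $\mathbb{G}_m$.

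Since $\Aut_{\P}(X)$ is by construction a subgroup of $\Aut(X)$, the action of $H$ on $X$ is faithful, and in particular nontrivial. As $X$ is irreducible, the fixed locus of $H$ is a proper closed subset, so a general point $x\in X$ has a one-dimensional orbit $H\cdot x$. This orbit is the image of $\mathbb{G}_a\cong\A^1$ or of $\mathbb{G}_m\cong\A^1\setminus\{0\}$, hence a rational curve, and its closure $C\subset X$ is a complete rational curve passing through $x$. Letting $x$ vary, these orbit closures sweep out a dense open subset of $X$, so $X$ is uniruled.

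Finally I would derive a contradiction with the nefness of $K_X$. For a uniruled smooth projective variety there is a covering family of rational curves whose general member is free; for a free rational curve $\nu\colon\P^1\to X$ the bundle $\nu^{*}T_X$ is globally generated, and the nonzero differential embeds $T_{\P^1}\cong\O_{\P^1}(2)$ into it, so $-K_X\cdot C=\deg\nu^{*}T_X\ge 2$. Thus $K_X\cdot C<0$, contradicting the assumption that $K_X$ is numerically effective. Hence $\dim\Aut_{\P}(X)=0$ and $\Aut_{\P}(X)$ is finite.

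The main obstacle will be the last step: converting a nontrivial $\mathbb{G}_a$- or $\mathbb{G}_m$-action into a rational curve of negative canonical degree. One cannot argue orbit by orbit, since an individual orbit closure need not satisfy $K_X\cdot C<0$; the essential point is that the moving orbits form a genuinely covering family, which forces $X$ to be uniruled, and then one must invoke the existence of a free, and therefore $-K_X$-positive, rational curve in a covering family — a fact resting on Mori's bend-and-break techniques — in order to contradict the nefness of $K_X$.
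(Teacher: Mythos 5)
Your proof is correct and follows essentially the same route as the paper: Lemma~\ref{lemma:non-ruled} gives that $\Aut_{\P}(X)$ is a linear algebraic group, an infinite such group contains a copy of $\CC^{+}$ or $\CC^{\times}$ whose general orbits sweep out rational curves covering $X$, and this contradicts nefness of $K_X$ by the theory of rational curves on uniruled varieties (the paper cites Miyaoka--Mori, while you spell out the free-curve computation, but this is the same underlying mechanism).
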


\begin{proof}
By Lemma~\ref{lemma:non-ruled}, the group $\Aut_{\P}(X)$ is a linear algebraic group.
Therefore, if~\mbox{$\Aut_{\P}(X)$} is infinite, then it contains a subgroup
isomorphic either to $\CC^{\times}$ or to $\CC^+$,
which implies that~$X$ is covered by rational curves.
On the other hand, since $K_X$ is numerically effective,
$X$ can't be covered by rational
curves, see~\cite[Theorem 1]{MM86}.
\end{proof}

The main tool we use in the proof of Theorem~\ref{theorem:automorphisms}
is the following result from~\cite{Fle81}.

\begin{theorem}[{see \cite[Satz 8.11(c)]{Fle81}}]
\label{theorem:Flenner}
Let $X$ be a smooth weighted complete intersection of dimension $n\ge 2$.
Then
$$
H^n\big(X, \Omega^1_X\otimes\mathcal{O}_X(-i)\big)=0
$$
for every integer $i\le n-2$.
\end{theorem}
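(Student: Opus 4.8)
The plan is to reduce the vanishing to the cohomology of line bundles on $X$, which is completely controlled by Proposition~\ref{proposition:Poincare}(ii), by running the computation through the two standard short exact sequences relating $\Omega^1_X$ to the ambient weighted projective space $\P=\P(a_0,\ldots,a_N)$. Write $X$ as a weighted complete intersection of multidegree $(d_1,\ldots,d_k)$ cut out by $f_1,\ldots,f_k$, so that $n=N-k$, and let $S=\bigoplus_{m\ge0}H^0(X,\O_X(m))$ be its homogeneous coordinate ring (the identification of $S$ with this sum of sections being the case $i=0$ of Proposition~\ref{proposition:Poincare}). First I would restrict the Euler sequence $0\to\Omega^1_{\P}\to\bigoplus_{l=0}^N\O_{\P}(-a_l)\to\O_{\P}\to0$ to $X$ and twist by $\O_X(-i)$; since $H^{n-1}(X,\O_X(-i))=0$ for every $n\ge2$ by Proposition~\ref{proposition:Poincare}(ii), the long exact sequence identifies
$$
H^n\big(X,\Omega^1_{\P}|_X\otimes\O_X(-i)\big)\cong\ker\Big(\phi\colon\bigoplus_{l=0}^N H^n\big(X,\O_X(-a_l-i)\big)\to H^n\big(X,\O_X(-i)\big)\Big),
$$
where $\phi$ is induced by multiplication by the coordinates $x_l$.

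Next I would feed this into the conormal sequence $0\to\bigoplus_{j=1}^k\O_X(-d_j)\to\Omega^1_{\P}|_X\to\Omega^1_X\to0$, whose first map is $df_j=\big(\frac{\partial f_j}{\partial x_l}\big)_l$. Twisting by $\O_X(-i)$ and using $H^{n+1}(X,-)=0$ (as $\dim X=n$), the long exact sequence presents the desired group as a cokernel,
$$
H^n\big(X,\Omega^1_X\otimes\O_X(-i)\big)=\operatorname{coker}\Big(\alpha\colon\bigoplus_{j=1}^k H^n\big(X,\O_X(-d_j-i)\big)\to\ker\phi\Big),
$$
with $\alpha$ given by the Jacobian matrix $\big(\frac{\partial f_j}{\partial x_l}\big)$. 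Thus the theorem becomes the assertion that the three-term complex
$$
\bigoplus_{j}H^n\big(\O_X(-d_j-i)\big)\xrightarrow{\ (\partial f_j/\partial x_l)\ }\bigoplus_{l}H^n\big(\O_X(-a_l-i)\big)\xrightarrow{\ (x_l)\ }H^n\big(\O_X(-i)\big)
$$
is exact in the middle.

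To handle this I would dualize. By Serre duality together with $\omega_X=\O_X(-i_X)$ from Theorem~\ref{theorem:adjunction}, one has $H^n(X,\O_X(p))^\vee\cong S_{-p-i_X}$, so, writing $m=i-i_X$, the complex above is dual to the graded-piece complex
$$
S_{m}\xrightarrow{\ s\mapsto(x_l s)_l\ }\bigoplus_{l}S_{m+a_l}\xrightarrow{\ (t_l)\mapsto(\sum_l t_l\,\partial f_j/\partial x_l)_j\ }\bigoplus_{j}S_{m+d_j},
$$
whose middle cohomology is $H^0(X,T_X\otimes\O_X(m))$; middle exactness of the original complex is equivalent to middle exactness here. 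The inclusion ``image $\subseteq$ kernel'' is the Euler identity $\sum_l x_l\frac{\partial f_j}{\partial x_l}=d_jf_j\equiv0$ on $X$; what remains is the reverse inclusion, namely that every syzygy $(t_l)$ of the partial derivatives in degree $m+a_l$ is a multiple $(x_l s)_l$ of the Euler syzygy. Equivalently, the whole theorem amounts to the vanishing $H^0(X,T_X\otimes\O_X(m))=0$ for $m=i-i_X\le n-2-i_X$, i.e. the absence of vector fields of sufficiently negative twist, which fits the general theme of the paper.

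This last statement is the heart of the matter, and it is where I expect the main difficulty to lie. It is a purely algebraic assertion about the first syzygies of the Jacobian ideal of $f_1,\ldots,f_k$ in $S$, and it should be established using quasi-smoothness of $X$ (which holds because $X$ is smooth and well formed). Quasi-smoothness says exactly that the punctured affine cone is smooth, i.e. the matrix $\big(\frac{\partial f_j}{\partial x_l}\big)$ has maximal rank $k$ at every point off the vertex; equivalently the ideal generated by its maximal minors together with the $f_j$ is supported only at the origin, hence has the largest possible codimension. The plan is to convert this codimension bound into the required depth/regularity statement --- via an Eagon--Northcott or Koszul type resolution of the syzygy module, or equivalently a local cohomology computation on the cone $C_X$ in the spirit of the proof of Proposition~\ref{proposition:Poincare} --- and to check that the numerical hypothesis $i\le n-2$ is precisely what places the graded pieces in the range where the trivial (Euler) syzygies exhaust all syzygies. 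Verifying this exactness in the correct range of degrees, rather than the formal reductions above, is the step I anticipate being genuinely delicate.
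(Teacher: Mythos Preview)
The paper does not give its own proof of this theorem; it is quoted directly from Flenner~\cite[Satz~8.11(c)]{Fle81} and used as a black box (the remark immediately after the statement makes this explicit). So there is no argument in the paper to compare against, and your proposal is an attempt to supply a proof where the authors chose to cite one.

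Your formal reductions are correct. A smooth well formed weighted complete intersection is disjoint from $\Sing\P$, so restricting the Euler sequence to $X$ is unproblematic; combining this with the conormal sequence, Proposition~\ref{proposition:Poincare}(ii), and Serre duality with Theorem~\ref{theorem:adjunction} does identify $H^n\big(X,\Omega^1_X\otimes\O_X(-i)\big)^\vee$ with the middle cohomology of your three-term complex, and that middle cohomology is indeed $H^0\big(X,T_X\otimes\O_X(m)\big)$ for $m=i-i_X$. But notice that this last identification is nothing more than Serre duality applied to the original statement: you have translated the vanishing, not reduced its difficulty. Everything up to this point is bookkeeping, as you yourself say.

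The genuine gap is the final step, which you leave as a plan. For hypersurfaces your strategy actually works cleanly: smoothness makes the partials $\partial f/\partial x_l$ a regular sequence in $R$, so after absorbing a multiple of $f$ via the Euler identity the Koszul complex shows that all low-degree syzygies are trivial, and one recovers the bound (even one step better than stated). For codimension $k\ge 2$, however, the partials $\partial f_j/\partial x_l$ are \emph{not} a regular sequence, and neither a Koszul nor an Eagon--Northcott complex resolves the syzygy module of the Jacobian matrix in the way you need; the quasi-smoothness hypothesis controls the locus where the $k\times k$ minors vanish, but converting that into the precise degree bound $m\le n-2-i_X$ on syzygies modulo the Euler relation is exactly the substantive content of Flenner's theorem. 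Your outline does not yet contain a mechanism that produces this bound in higher codimension, so the proposal remains a reformulation rather than a proof.
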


\begin{remark}
Actually, the assertion of \cite[Satz 8.11(c)]{Fle81} gives more vanishing results
and holds under the weaker assumption that $X$ is quasi-smooth. However, we do not want to go into
details with the definition of the sheaf $\Omega_X^1$ here, and in any case we will need smoothness of
$X$ on the next step.
\end{remark}

Theorem~\ref{theorem:Flenner} allows us to prove finiteness of various automorphism groups.

\begin{corollary}
\label{corollary:Flenner}
Let $X$ be a smooth well formed weighted complete intersection
of dimension~\mbox{$n\ge 2$}.
Suppose that $i_X\le n-2$.
The following assertions hold.
\begin{itemize}
\item[(i)] One has $H^0(X, T_X)=0$.

\item[(ii)] The group $\Aut_{\P}(X)$ is finite.

\item[(iii)] If either $\dim X\ge 3$ or $K_X\neq 0$,
then the group $\Aut(X)$ is finite.
\end{itemize}
\end{corollary}

\begin{proof}
By Theorem~\ref{theorem:Flenner}
we have
$$
H^n\big(X, \Omega^1_X\otimes\mathcal{O}_X(-i_X)\big)=0.
$$
Recall that $\omega_X=\mathcal{O}_X(-i_X)$ by Theorem~\ref{theorem:adjunction}.
Thus assertion~(i) follows from Serre duality.
Assertion~(ii) follows from assertion~(i), because
$\Aut_{\P}(X)$ is a linear algebraic group
by Lemma~\ref{lemma:non-ruled}.
Similarly, assertion~(iii) follows from assertion~(i), because
the automorphism group of any variety
subject to the above assumptions
is a linear algebraic group by Corollary~\ref{corollary:LAG}.
\end{proof}

Recall that the smooth Fano threefold $V_5$ of Fano index $\dim V_5-1=2$
that is defined as an intersection of the Grassmannian
$\mathrm{Gr}(2,5)\subset\P^9$
in its Pl\"ucker embedding with a linear section of codimension
$3$ has infinite automorphism group $\Aut(V_5)\cong\mathrm{PGL}_2(\CC)$,
see~\mbox{\cite[Proposition~4.4]{Mukai-CurvesK3Fano}}
or \cite[Proposition~7.1.10]{CheltsovShramov}.
The next lemma shows that such a situation is impossible
for smooth weighted complete intersections.

\begin{lemma}
\label{lemma:low-coindex}
Let $X\subset \PP$ be a smooth well formed weighted complete intersection of
dimension~\mbox{$n\ge 2$}. Suppose that $i_X=n-1$. Then
the group $\Aut(X)$ is finite.
\end{lemma}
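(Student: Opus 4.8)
The plan is to reduce to the classification obtained in Proposition~\ref{proposition:low-coindex}(iv) and then inspect each of the finitely many resulting families directly. By Corollary~\ref{corollary:index} the hypothesis $i_X=n-1$ says that $X$ has Fano index $n-1$, so $X$ is a Fano variety. First I would dispose of the low-dimensional case $n=2$ separately, where $i_X=1$: by Remark~\ref{remark:dP} the surface $X$ is a del Pezzo surface of degree at most $4$, and such surfaces have finite automorphism group (alternatively one invokes Theorem~\ref{theorem:Benoist} after the uniqueness of embedding, or argues directly that a del Pezzo surface of degree $\le 4$ is not toric and carries no infinitesimal automorphisms). For $n\ge 3$ I would apply Proposition~\ref{proposition:low-coindex}(iv), which tells us that $X$ is isomorphic to one of exactly four types: a degree-$6$ hypersurface in $\P(1^n,2,3)$, a degree-$4$ hypersurface in $\P(1^{n+1},2)$, a cubic hypersurface in $\P^{n+1}$, or an intersection of two quadrics in $\P^{n+2}$.

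The key observation is that each of these four families satisfies $i_X=n-1$, which with $n\ge 3$ gives $i_X\ge 2$, so none of them is a $K3$ surface, a quadric, or all of $\P^N$; hence the last two families (the cubic and the $(2,2)$-intersection in honest projective space) fall under Benoist's Theorem~\ref{theorem:Benoist}, yielding finiteness of $\Aut(X)$ immediately. For the two genuinely weighted families I would argue directly. The most efficient route is to notice that for these varieties $i_X=n-1\ge 2$, while the defining degrees are strictly larger than all but finitely many of the weights; in fact for each family the condition $i_X\le n-2$ is \emph{almost} met and fails only by a controlled amount, so I would instead compute $H^0(X,T_X)$ by hand.

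Concretely, for the degree-$4$ hypersurface in $\P(1^{n+1},2)$ and the degree-$6$ hypersurface in $\P(1^n,2,3)$ I would use the description of infinitesimal automorphisms via the normal bundle sequence together with Proposition~\ref{proposition:Poincare} and Corollary~\ref{corolary:linear-normality}. Since $\Aut(X)$ is a linear algebraic group by Corollary~\ref{corollary:LAG} (as $n\ge 3$), it suffices to prove $H^0(X,T_X)=0$, for then $\Aut(X)$ is finite. The tangent sheaf fits into $0\to T_X\to T_\P|_X\to \O_X(d)\to 0$ where $d$ is the degree of the defining equation, and I would bound $H^0(X,T_X)$ by $H^0$ of the weighted Euler sequence restricted to $X$ minus the image in $H^0(X,\O_X(d))$. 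Using the Poincar\'e series to count sections of $\O_X(m)$ in each weight, the claim reduces to checking that the generic defining polynomial imposes enough conditions that no nontrivial vector field survives; equivalently, that $X$ is not toric and has no continuous symmetries.

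The main obstacle I expect is exactly these two weighted families, because Flenner's vanishing in Theorem~\ref{theorem:Flenner} and hence Corollary~\ref{corollary:Flenner}(iii) apply only when $i_X\le n-2$, and here $i_X=n-1$ sits just outside that range. So the single vanishing statement we are handed does not close the gap, and the borderline case must be handled by the explicit finite list rather than by a uniform cohomological argument. I anticipate the cleanest way to finish is to combine the Benoist theorem for the two classical families with a direct, low-effort computation of $H^0(X,T_X)=0$ for the two weighted families, the latter being reduced to a finite check using the weight data $(1^{n+1},2;4)$ and $(1^n,2,3;6)$ and the surjectivity of restriction from Corollary~\ref{corolary:linear-normality}.
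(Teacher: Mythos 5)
Your overall skeleton matches the paper's proof: dispose of $n=2$ via Remark~\ref{remark:dP} and finiteness of automorphisms of del Pezzo surfaces of degree at most $4$, invoke Proposition~\ref{proposition:low-coindex}(iv) for $n\ge 3$, and handle the cubic and the intersection of two quadrics by Theorem~\ref{theorem:Benoist}. The gap is in the two weighted families, which is precisely where the real content of the lemma lies (as you yourself note, Corollary~\ref{corollary:Flenner}(iii) just misses the case $i_X=n-1$). Your reduction to $H^0(X,T_X)=0$ via Corollary~\ref{corollary:LAG} is legitimate, but you never prove this vanishing: the decisive step is deferred to ``checking that the generic defining polynomial imposes enough conditions that no nontrivial vector field survives; equivalently, that $X$ is not toric and has no continuous symmetries.'' The second formulation is circular --- ``no continuous symmetries'' is exactly the statement being proved --- and the word ``generic'' is a genuine error: $h^0(T_X)$ is upper semicontinuous in families, so vanishing for a generic member does \emph{not} imply vanishing for every smooth member, which is what the lemma asserts. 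Moreover, dimension counts via Proposition~\ref{proposition:Poincare} and Corollary~\ref{corolary:linear-normality} cannot by themselves finish the argument: one must show that the kernel of the map $\bigoplus_i H^0(\O_X(a_i))\to H^0(\O_X(d))$ sending $(g_i)$ to $\sum g_i\,\partial f/\partial x_i$ consists only of multiples of the Euler field, and that is a statement about syzygies of the Jacobian ideal of the particular polynomial $f$, not about dimensions of graded pieces. (The gap is fillable: quasi-smoothness makes the partials $\partial f/\partial x_0,\ldots,\partial f/\partial x_N$ a regular sequence, so all syzygies are Koszul and live in degrees $2d-a_i-a_j$, which exceed $d$ for both weight systems $(1^{n+1},2;4)$ and $(1^n,2,3;6)$; hence any relation $\sum g_i\,\partial f/\partial x_i=cf$ with $\deg g_i=a_i$ is a multiple of the Euler relation. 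But none of this is in your write-up, and it must be, since it is the heart of the matter.)

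For comparison, the paper sidesteps this computation entirely. For the degree-$4$ hypersurface in $\P(1^{n+1},2)$ and the degree-$6$ hypersurface in $\P(1^n,2,3)$ it uses the $\Aut(X)$-equivariant double cover $\phi\colon X\to Y$ (with $Y\cong\P^n$, respectively $Y\cong\P(1^n,2)$) given by $|H|$, respectively $|2H|$, where $-K_X\sim (n-1)H$. The smooth branch divisor $B\subset Y$ then has either $K_B$ ample, or $K_B\sim 0$, or $B$ is a smooth well formed Fano weighted hypersurface of dimension $n-1\ge 3$ with $i_B\le n-3$ --- that is, $B$ lands back inside the range where Flenner's vanishing applies. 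Since $\Aut(Y;B)$ acts faithfully on $B$, finiteness of $\Aut_Y(B)$ (Corollary~\ref{corollary:Flenner}(ii)) gives finiteness of $\Aut(Y;B)$, and hence of $\Aut(X)$, which is an extension of a subgroup of $\Aut(Y;B)$ by the Galois involution of $\phi$. This ``pass to the branch divisor'' device is the key idea your proposal is missing; either adopt it, or carry out the syzygy computation sketched above for every (not just generic) smooth member of the two families.
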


\begin{proof}
If $n=2$, the assertion follows from Remark~\ref{remark:dP} and
the properties of automorphism groups of smooth del Pezzo
surfaces, see for instance \cite[Corollary~8.2.40]{Dolgachev-CAG}.
Thus, we assume that $n\ge 3$ and
use the classification provided by Proposition~\ref{proposition:low-coindex}(iv).
If $X$ is isomorphic to an intersection
of two quadrics in~\mbox{$\P=\P^{n+2}$} or to a cubic hypersurface in~\mbox{$\P=\P^{n+1}$},
then the assertion follows from
Theorem~\ref{theorem:Benoist} (in the latter case one can also
use Theorem~\ref{theorem:MM}).

Now suppose that $X$ is isomorphic either to a
 hypersurface of degree $4$ in~\mbox{$\P=\P(1^{n+1}, 2)$}, or
to a hypersurface
of degree $6$ in $\P=\P(1^n, 2,3)$.
The argument in these cases is similar to that in the proof of \cite[Lemma~4.4.1]{KPS18}.
Denote by~$H$ the ample divisor such that~\mbox{$-K_X\sim (n-1)H$}.
Then there exists
an $\Aut(X)$-equivariant double cover~\mbox{$\phi\colon X\to Y$},
where in the former case $Y\cong\P^n$ and $\phi$ is given by the linear system
$|H|$, while in the latter case~\mbox{$Y\cong\P(1^{n},2)$} and $\phi$ is given by the linear system
$|2H|$. Let $H'$ be the ample Weil divisor generating the group
$\Cl(Y)\cong\ZZ$, and let
$B\subset Y$ be the branch divisor of $\phi$. In the former
case one has $B\sim 4H'$, and in the latter case one has $B\sim 6H'$.
Note that in the latter case~$H'$ is not Cartier, but $2H'$ is;
note also that in this case $\phi$ is branched over the singular point
of $Y$ as well.
In both cases $B$ is smooth. Furthermore, it follows from adjunction
formula that either $K_B$ is ample, or $K_B\sim 0$, or $B$ is a
(smooth well formed) Fano weighted hypersurface of dimension $n-1\ge 3$ and
Fano index $i_B\le n-3$.

Since the double cover $\phi$ is $\Aut(X)$-equivariant,
we see that the quotient of the group~\mbox{$\Aut(X)$} by its normal subgroup
of order $2$ generated by the Galois involution of~$\phi$
is isomorphic to a subgroup of the stabilizer $\Aut(Y;B)$ of
$B$ in $\Aut(Y)$.
Since $B$ is not contained in any divisor linearly equivalent to
the very ample divisor $2H'$, we conclude that~\mbox{$\Aut(Y;B)$}
acts faithfully on $B$, see for
instance~\cite[Lemma~2.1]{CPS19}.
Hence
$$
\Aut(Y;B)\cong\Aut_Y(B).
$$
On the other hand, the group $\Aut_Y(B)$ is finite by
Corollary~\ref{corollary:Flenner}(ii); alternatively, one can apply
Corollaries~\ref{corollary:non-ruled} and~\ref{corollary:Flenner}(iii).
This means that the group $\Aut(X)$ is finite as well.
\end{proof}

Now we prove our main results.

\begin{proof}[Proof of Theorem~\ref{theorem:automorphisms}]
First suppose that $n=1$. We may assume
that $K_X$ is ample. In this case the finiteness
of $\Aut(X)$ is well-known, see for
instance~\mbox{\cite[Exercise~IV.5.2]{Ha77}}.

Now suppose that $n\ge 2$.
If $i_X\le n-2$, then the group $\Aut(X)$ is finite by
Corollary~\ref{corollary:Flenner}(iii).
If~\mbox{$i_X=n-1$}, then
the group $\Aut(X)$ is finite by Lemma~\ref{lemma:low-coindex}.
Finally, if~\mbox{$i_X\ge n$}, then we know from
Proposition~\ref{proposition:low-coindex}
that $X$ is isomorphic either to~$\P^n$ or to
a quadric hypersurface in~$\P^{n+1}$.
\end{proof}

Corollary~\ref{corollary:automorphisms} immediately follows
from Theorem~\ref{theorem:automorphisms} and
Proposition~\ref{proposition:unique-embedding}.


\begin{thebibliography}{ACGHHZhP}

\bibitem[ACM18]{ACM}
C.\,Araujo, M.\,Corr$\hat{\mathrm e}$a, A.\, Massarenti, {\it Codimension one Fano distributions on Fano manifolds}, Commun. Contemp. Math. 20 (2018), no. 5, 1750058.

\bibitem[Be13]{Be13}
O.\,Benoist, {\it S\'eparation et propri\'et\'e de Deligne--Mumford des champs de modules d'intersections compl\`etes lisses},
J. Lond. Math. Soc. (2) 87 (2013), no. 1, 138--156.

\bibitem[Bo69]{Borel}
A.\,Borel, \emph{Linear algebraic groups},
W.\,A.\,Benjamin, Inc., New York-Amsterdam, 1969.




\bibitem[CPS19]{CPS19}
I.\,Cheltsov, V.\,Przyjalkowski, C.\,Shramov, {\it Fano threefolds with infinite automorphism groups},
Izv. Math. 83:4 (2019), 860--907. 

\bibitem[CS15]{CheltsovShramov}
I.\,Cheltsov, C.\,Shramov, \emph{Cremona groups and the icosahedron},
Monographs and Research Notes in Mathematics. CRC Press, Boca Raton, FL, 2016.

\bibitem[CCC11]{ChenChenChen}
J.-J.\,Chen, J.\,Chen, M.\,Chen,
\emph{On quasismooth weighted complete intersections},
J. Algebraic Geom. 20 (2011), no. 2, 239--262.


\bibitem[Do82]{Do82}
I.\,Dolgachev, \emph{Weighted projective varieties}, in ``Lecture Notes in Math.'', 956,
Springer-Verlag, Berlin (1982), 34--71.

\bibitem[Do12]{Dolgachev-CAG}
I.\,Dolgachev, \emph{Classical algebraic geometry. A modern view.}
Cambridge University Press, Cambridge, 2012.


\bibitem[Fle81]{Fle81}
H.\,Flenner, {\it Divisorenklassengruppen quasihomogener Singularitäten}, J. Reine Angew. Math. 328 (1981), 128--160.

\bibitem[Fu80-84]{Fu80-84}
T.\,Fujita,
{\it On the structure of polarized manifolds of total deficiency one, I, II and III},
J. Math. Soc. Japan 32 (1980), 709--725, ibid., 33 (1981), 415--434, and 36 (1984), 75--89.


\bibitem[HMX13]{HMX}
C.\,Hacon, J.\,$\mathrm{M^c}$Kernan, C.\,Xu,
{\it On the birational automorphisms of varieties of general type}, Ann. of Math. 177:3 (2013),  1077--1111.

\bibitem[Ha67]{Ha67}
R.\,Hartshorne, {\it Local cohomology. A seminar given by A.\,Grothendieck, Harvard University, Fall, 1961}, Lecture Notes in Mathematics, No. 41 Springer--Verlag, Berlin--New York, 1967.

\bibitem[Ha77]{Ha77}
R.\,Hartshorne, {\it Algebraic geometry}, Graduate Texts in Mathematics, \textbf{52}. Springer-Verlag, New York-Heidelberg, 1977.

\bibitem[IF00]{IF00}
A.\,R.\,Iano-Fletcher, \emph{Working with weighted complete intersections}, Explicit birational geometry of 3-folds, 101--173, London Math. Soc. Lecture Note Ser., \textbf{281}, Cambridge Univ. Press, Cambridge, 2000.

\bibitem[IP99]{IP99}
V.\,Iskovskikh, Yu.\,Prokhorov, \emph{Fano varieties},
Encyclopaedia of Mathematical Sciences, \textbf{47}. Springer, Berlin, 1999.

\bibitem[KS58]{KS58}
K.\,Kodaira, D.\,Spencer, {\it On Deformations of Complex Analytic Structures, II}, Annals of Mathematics
Second Series, 67:3 (1958), 403--466.


\bibitem[KPS18]{KPS18}
A.\,Kuznetsov, Yu.\,Prokhorov, C.\,Shramov,
{\it Hilbert schemes of lines and conics and automorphism groups of Fano threefolds},
Jpn. J. Math. 13 (2018), no. 1, 109--185.


\bibitem[Ma99]{Ma99}
A.\,Mavlyutov, \emph{Cohomology of complete intersections in toric varieties}, Pacific J. Math.  191  (1999),  no. 1, 133--144.

\bibitem[MaMo63]{MM63}
H.\,Matsumura, P.\,Monsky, {\it On the automorphisms of hypersurfaces},
J. Math. Kyoto Univ. 3 (1963/1964), 347--361.

\bibitem[MiMo86]{MM86}
Y.\,Miyaoka, S.\,Mori, {\it A numerical criterion for uniruledness}, Ann. of Math. (2)  124  (1986),  no. 1, 65--69.

\bibitem[Mo75]{Mo75}
S.\,Mori, {\it On a generalization of complete intersections},
J. Math. Kyoto Univ.  15  (1975), no. 3, 619--646.

\bibitem[Mu88]{Mukai-CurvesK3Fano}
S.\,Mukai, \emph{Curves, $K3$ surfaces and Fano $3$-folds of genus $\leqslant 10$}, Algebraic Geometry and Commutative Algebra, in
Honor of Masayoshi Nagata, Vol.~I, 357--377 (1988).


\bibitem[Ok16]{Okada2}
T.\,Okada, {\it Stable rationality of orbifold Fano threefold hypersurfaces}, 
J. Algebraic Goem. 28 (2019), 99--138.


\bibitem[PST17]{PST}
M.\,Pizzato, T.\,Sano, L.\,Tasin, \emph{Effective nonvanishing for Fano weighted complete intersections},
Algebra Number Theory \textbf{11} (2017), no. 10, 2369--2395.

\bibitem[PSh16]{PrzyalkowskiShramov-Weighted}
V.\,Przyjalkowski, C.\,Shramov, \emph{Bounds for smooth Fano weighted complete intersections}, arXiv:1611.09556.

\bibitem[PSh18]{PSh18}
V.\,Przyjalkowski, C.\,Shramov, {\it Hodge level for weighted complete intersections},
arXiv:1801.10489.




\end{thebibliography}
\end{document}